\newtheorem{theorem}{Theorem}[section]
\newtheorem{proposition}[theorem]{Proposition}
\newtheorem{lemma}[theorem]{Lemma}
\newtheorem{corollary}[theorem]{Corollary}
\newtheorem{main}{Theorem}[section]
\theoremstyle{definition}
\newtheorem{definition}[theorem]{Definition}
\newtheorem{defn}[theorem]{Definition}
\newtheorem{example}[theorem]{Example}
\theoremstyle{remark}
\newtheorem{remark}[theorem]{Remark}
\numberwithin{equation}{section}
\newcommand{\F}{\mathbb{F}}
\newcommand{\M}{\mathcal{M}}
\newcommand{\cN}{\mathcal{N}}
\newcommand{\algo}{Alg_\cat{O}}
\renewcommand{\tilde}[1]{\widetilde{#1}}
\DeclareMathOperator{\Sym}{Sym}
\DeclareMathOperator{\Ho}{Ho}
\DeclareMathOperator{\ho}{Ho}
\newcommand{\po}{\ar@{}[dr]|(.7){\Searrow}}
\newcommand{\pb}{\ar@{}[dr]|(.3){\Nwarrow}}
\DeclareMathOperator{\map}{map}
\newcommand{\cat}[1]{\mathcal{#1}}
\newcommand{\algt}{\mbox{Alg}_T}
\newcommand{\algtlcm}{\algt(L_\cat{C}(\M))}
\newcommand{\algtm}{\algt(\M)}
\newcommand{\ltcalgtm}{L_{T(\cat{C})}\algtm}
\newcommand{\lcm}{L_\cat{C}(\M)}
\newcommand{\lc}{L_{\cat{C}}}
\newcommand{\ltc}{L_{T(\cat{C})}}
\newcommand{\hoalgtm}{\mbox{Ho(Alg}_T(\M))}
\begin{document}

\title{Left Bousfield Localization and Eilenberg-Moore Categories}

\author{Michael Batanin}
\address{Department of Mathematics \\ Macquarie University \\ North Ryde, 2109 Sydney, Australia}
\email{michael.batanin@mq.edu.au}
\thanks{The first author gratefully acknowledges the financial support of Scott Russel Johnson Memorial Foundation  and Australian Research Council (grants  No. DP1095346). }

\author{David White}
\address{Department of Mathematics and Computer Science \\ Denison University
\\ Granville, OH 43023}
\email{david.white@denison.edu}
\thanks{The second author was supported by the National Science Foundation under Grant No. IIA-1414942}

\begin{abstract}
We prove the equivalence of several hypotheses that have appeared recently in the literature for studying left Bousfield localization and algebras over a monad. We find conditions so that there is a model structure for local algebras, so that localization preserves algebras, and so that localization lifts to the level of algebras. We include examples coming from the theory of colored operads, and applications to spaces, spectra, and chain complexes.
\end{abstract}

\maketitle

\tableofcontents

\section{Introduction}

Left Bousfield localization has become a fundamental tool in modern abstract homotopy theory. The ability to take a well-behaved model category and a prescribed set of maps, and then produce a new model structure where those maps are weak equivalences, has applications in a variety of settings. Left Bousfield localization is required to construct modern stable model structures for spectra, including equivariant and motivic spectra. Left Bousfield localization also provides a powerful computational device for studying spaces, spectra, homology theories, and numerous algebraic examples of interest (e.g. in the model categories $Ch(R)$ and $R$-mod that arise when studying homological algebra and the stable module category, respectively).

In recent years, several groups of researchers have been applying the machinery of left Bousfield localization to better understand algebras over (colored) operads, especially results regarding when algebraic structure is preserved by localization, e.g. \cite{batanin-baez-dolan-via-semi}, \cite{batanin-berger}, \cite{blumberg-hill-2}, \cite{cgmv}, \cite{carato}, \cite{gutierrez-rondigs-spitzweck-ostvaer-colocalizations}, \cite{white-localization}, \cite{white-yau}. Unsurprisingly, several different approaches have emerged. In this paper, we will prove that these approaches are equivalent, and will use this equivalence to provide new structural features that may be used in any of these settings.

Our setting will be a monoidal model category $\M$, with an action of a monad $T$, and with a prescribed set of maps $\cat{C}$ that we wish to invert. In our applications, $\M$ will be sufficiently nice that the left Bousfield localization $L_\cat{C}(\M)$ exists, but for much of the paper we do not require this existence. Our main theorem follows (and will be proven as Theorem \ref{thm:omnibus}):

\begin{main} \label{thm:main}
Assume $T$ is a monad on $\M$ such that $\algtm$ has a transferred semi-model structure, and that $\lcm$ is compactly generated. The following are equivalent:
\begin{enumerate}
\item $L_\cat{C}$ lifts to a left Bousfield localization $\ltc$ of semi-model categories on $\algtm$.
\item $U:\algtm \to \M$ preserves local equivalences.
\item $\algtlcm$ has a transferred semi-model structure.
\item $L_\cat{C}$ preserves $T$-algebras.
\end{enumerate}
Furthermore, any of the above implies
\begin{enumerate}
\item[(5)] $T$ preserves $\cat{C}$-local equivalences between cofibrant objects.
\end{enumerate}
\end{main}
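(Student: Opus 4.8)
The plan is to establish the equivalence of (1)--(4) by a single cycle of implications, $(3)\Rightarrow(1)\Rightarrow(2)\Rightarrow(4)\Rightarrow(3)$, and then to deduce (5) from the conjunction of (1) and (2). The organizing observation is that $\algtlcm$ and the underlying category of $\ltcalgtm$ are literally the \emph{same} category, namely $\algtm$, since the monad $T$ and its category of algebras depend only on $\M$ as a plain category and not on its model structure; moreover both inherit the same candidate generating cofibrations, the image $F(I_{\cat{C}})$ under the free functor $F$ of a set $I_{\cat{C}}$ of generating cofibrations of $\lcm$. Thus the whole of $(1)\Leftrightarrow(3)$ is really the assertion that these two descriptions endow $\algtm$ with the \emph{same} semi-model structure. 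Throughout, compact generation of $\lcm$ is what licenses the small-object argument and the description of $\cat{C}$-local objects and equivalences by a set of maps.

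For the individual arrows I would argue as follows. $(3)\Rightarrow(1)$: the transferred structure on $\algtlcm$ has the same cofibrations as $\algtm$ and strictly more weak equivalences, and one checks directly that its weak equivalences are precisely the $T(\cat{C})$-local equivalences, so it satisfies the universal property exhibiting it as $\ltcalgtm$ and hence as a lift of $\lc$. $(1)\Rightarrow(2)$: since the lift is a left Bousfield localization, cofibrations are unchanged, so $U\colon \ltcalgtm \to \lcm$ remains right Quillen; combining this with the identification of the weak equivalences of $\ltcalgtm$ as the $T(\cat{C})$-local equivalences shows that $U$ carries local equivalences to $\cat{C}$-local equivalences. $(2)\Rightarrow(4)$: given an algebra $A$, build a fibrant replacement $A\to \hat{A}$ in $\algtm$ by a transfinite $F(J_{\cat{C}})$-cell attachment; condition (2) forces $U(A)\to U(\hat{A})$ to be a $\cat{C}$-local equivalence into a $\cat{C}$-local object, so it is $\lc(U(A))$, which is thereby exhibited as the underlying object of a $T$-algebra. $(4)\Rightarrow(3)$: if $\lc$ preserves $T$-algebras then the $\cat{C}$-localization of the underlying object of any algebra lifts to a compatible $T$-action, supplying the local fibrant replacements needed to verify the semi-model transfer axioms for $\algtlcm$, which closes the cycle.

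With $(1)$--$(4)$ in hand, (5) follows quickly. By (1) the free functor $F$ is left Quillen from $\lcm$ to $\ltcalgtm$, so by Ken Brown's lemma it preserves weak equivalences between cofibrant objects; hence for a $\cat{C}$-local equivalence $f$ between cofibrant objects of $\M$, the map $F(f)$ is a $T(\cat{C})$-local equivalence in $\algtm$. Applying (2), which says $U$ preserves local equivalences, we conclude that $UF(f)=T(f)$ is a $\cat{C}$-local equivalence. Thus $T$ preserves $\cat{C}$-local equivalences between cofibrant objects, which is (5). Note that only this one direction is claimed, consistent with the fact that $F$ preserves weak equivalences only between cofibrant objects.

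The step I expect to be the main obstacle is $(4)\Rightarrow(3)$, the existence of the transferred structure, whose crux is the semi-model transfer criterion: every relative $F(J_{\cat{C}})$-cell complex must have underlying map a $\cat{C}$-local equivalence. This does not follow formally; it is the point at which the structure of pushouts of free maps must be analyzed in the spirit of the monoid axiom, using that $U$ detects and (via (2)) preserves local equivalences, together with compact generation to control the transfinite filtration. It is precisely here that working with \emph{semi}-model structures is essential: one needs lifting and factorization to be controlled only for maps out of cofibrant algebras, which is what allows the cellular argument to go through without assuming that $T$ preserves all cofibrations.
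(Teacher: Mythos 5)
Your cycle $(3)\Rightarrow(1)\Rightarrow(2)\Rightarrow(4)\Rightarrow(3)$ does not close as written, because your proof of $(4)\Rightarrow(3)$ secretly uses (2): you say the cell-attachment analysis goes through ``using that $U$ detects and (via (2)) preserves local equivalences,'' but in a cyclic proof the step out of (4) must use only (4). What is missing is precisely Proposition \ref{prop:strong-pres-implies-U-preserves}: preservation of $T$-algebras by $L_\cat{C}$ implies that $U$ preserves local equivalences. This is not automatic; it is extracted from the data packaged in Definition \ref{defn:preservation-white} (the algebra map $r_E\colon E\to\tilde{E}$ and the comparison $\beta_E$) by first showing each $U(r_E)$ is a $\cat{C}$-local equivalence, and then, for an arbitrary $T(\cat{C})$-local equivalence $f\colon E\to F$, that $L_\cat{C}U(f)$ is isomorphic in $\ho(\M)$ to the induced map $\tilde{E}\to\tilde{F}$, a local equivalence between local objects and hence an isomorphism. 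Once $(4)\Rightarrow(2)$ is in place, the loop is closed by $(2)\Rightarrow(3)$ (Corollary \ref{cor:batanin-big-iff-semi}), which is exactly the pushout analysis you correctly flag as the crux; note that this analysis belongs to the implication out of (2), not out of (4), and that your version of it (inside $(2)\Rightarrow(4)$) already presupposes that a transfinite $F(J_{\cat{C}})$-cell attachment is a $T(\cat{C})$-local equivalence, which requires the homotopy-pushout argument (cofibrancy of $F(K)$, $F(L)$ and of the algebra being attached to, or left properness) together with compact generation for the transfinite composition.

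Second, your argument for $(1)\Rightarrow(2)$ is insufficient: knowing that $U\colon\ltcalgtm\to\lcm$ is right Quillen only gives preservation of (trivial) fibrations and, via Ken Brown, of weak equivalences between fibrant objects --- not of all local equivalences. The correct argument uses the defining datum of a lift (Definition \ref{defn:carato-lifting-defn}), namely the natural isomorphism $L_\cat{C}U\cong UL^T$ in $\ho(\M)$: if $f$ is a $T(\cat{C})$-local equivalence then $L^T(f)$ is an isomorphism, hence so is $L_\cat{C}U(f)$, hence $U(f)$ is a $\cat{C}$-local equivalence (Lemma \ref{lemma:carato-beta-iff-U-preserves}). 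On the positive side, your derivation of (5) from (1) and (2) by applying Ken Brown's lemma to the left Quillen functor $F\colon\lcm\to\ltcalgtm$ and then applying $U$ is correct and gives a self-contained alternative to the citation of \cite{carato}, and your $(3)\Rightarrow(1)$ matches Theorem \ref{thm:existence-implies-locAlg=algLoc}. But as it stands, neither the equivalence of (4) with the other conditions nor the implication $(1)\Rightarrow(2)$ is established.
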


This theorem unifies all known approaches to studying the homotopy theory of localization for algebras over a monad. Theorem \ref{thm:main} can be viewed as providing conditions to check to determine if localization preserves algebras, if localization lifts to the level of algebras, and if transferred semi-model structures exist.

In Section \ref{sec:alg-in-loc}, we discuss how to pass from the localized model structure $L_\cat{C}(\M)$ to the category $\algtlcm$, needed for item (3) in the theorem. In Section \ref{sec:loc-in-alg-carato}, we discuss lifting the localization $L_\cat{C}$ to a localization $L_{T(\cat{C})}$ of algebras, where $T(\cat{C})$ denotes the free $T$-algebra maps on $\cat{C}$. This is needed for items (1) and (2) of the theorem. In Section \ref{sec:preservation}, we compare these two approaches to studying local algebras, and we also compare them to item (4) above regarding preservation of $T$-algebras by localization. After proving our main theorem in Section \ref{sec:preservation}, we prove a partial converse, i.e. we determine what can be said if we know item (5) is true. In particular, we show that item (5), together with mild hypotheses on the monad $T$, implies that $U$ reflects local weak equivalences. Numerous examples and applications are given throughout the paper.

\section{Preliminaries}

We assume the reader is familiar with the basics of model categories. For our entire paper, $\M$ will denote a cofibrantly generated (semi-)model category, and $I$ (resp. $J$) will denote the generating cofibrations (resp. generating trivial cofibrations). Semi-model categories are recalled below, and familiarity with semi-model categories is not assumed.

\subsection{Left Bousfield Localization}

Given a model category $\M$ and a set of morphisms $\cat{C}$, the left Bousfield localization of $\M$ with respect to $\cat{C}$ is, if it exists, a new model structure on $\M$, denoted $L_\cat{C}(\M)$, satisfying the following universal property: the identity functor $\M \to L_\cat{C}(\M)$ is a left Quillen functor and for any model category $\cN$ and any left Quillen functor $F:\M \to \cN$ taking the maps $\cat{C}$ to weak equivalences, there is a unique left Quillen functor $L_\cat{C}(\M) \to \cN$ through which $F$ factors.

\begin{defn}
An object $N$ is said to be \textit{$\cat{C}$-local} if it is fibrant in $\M$ and if for all $g:X\to Y$ in $\cat{C}$, the induced map on simplicial mapping spaces $\map(g,N) \!:\map(Y,N)\to \map(X,N)$ is a weak equivalence in $sSet$. These objects are precisely the fibrant objects in $L_\cat{C}(\M)$. A map $f:A\to B$ is a \textit{$\cat{C}$-local equivalence} if for all $N$ as above, $\map(f,N):\map(B,N)\to \map(A,N)$ is a weak equivalence. These maps are precisely the weak equivalences in $L_\cat{C}(\M)$.
\end{defn}

The existence of the model category $L_\cat{C}(\M)$ can be guaranteed by assuming $\M$ is left proper and either cellular or combinatorial \cite{hirschhorn}. We never need these conditions, so we always just assume $L_\cat{C}(\M)$ exists. Throughout this paper, we assume $\cat{C}$ is a set of cofibrations between cofibrant objects. This can always be arranged by taking cofibrant replacements of the maps in $\cat{C}$.

\subsection{Algebras over Monads}

As we will be working with filtrations in order to transfer model structures to categories of algebras, an additional smallness condition will be required (one which would not be necessary if we were to assume $\M$ is combinatorial). The following definition is from \cite{batanin-berger}:

\begin{defn}
Let $\cat{K}$ be a saturated class of morphisms. $\M$ is called \textit{$\cat{K}$-compactly generated}, if all objects are small relative to $\cat{K}$-cell and if the weak equivalences are closed under filtered colimits along morphisms in $\cat{K}$.
\end{defn}

In practice, $\M$ is often a monoidal model category (i.e. satisfies the pushout product axiom as in \cite{SS00}), and $\cat{K}$ is often taken to be the class $(\M \otimes I)$-cell, i.e. the monoidal saturation of the cofibrations. 

We will be interested in transferring model structures to categories of algebras over a monad $T$, where $U$ is the forgetful functor from $T$-algebras to $\M$. We shall always assume $T$ is finitary, i.e. $T$ preserves filtered colimits. 

\begin{defn} \label{defn:transferred}
A model structure will be called \textit{transferred} from $\M$ if a map $f$ of $T$-algebras is a weak equivalence (resp. fibration) if and only if $U(f)$ is a weak equivalence (resp. fibration). We also call this structure on $\algtm$ \textit{projective}.
\end{defn}

Most of the model category axioms for the transferred model structure are easy to check, but the difficult one has to do with proving that the trivial cofibrations of $T$-algebras are saturated and contained in the weak equivalences. These trivial cofibrations are generated by the set $T(J)$, and so pushouts in $\algtm$ of the following sort must be considered:
\begin{align} \label{diagram:pushout}
\xymatrix{T(K)\ar[r]^{T(u)} \ar[d] & T(L)\ar[d]\\
A \ar[r]_h & B}
\end{align}

Often, such pushouts are computed by filtering the map $h:A\to B$ as a transfinite composition of pushouts in $\M$. The following definition ensures this works.

\begin{defn}
Let $\cat{K}$ be a saturated class of morphisms. A monad will be called \textit{$\cat{K}$-admissible} if, for each (trivial) cofibration $u:K\to L$ in $\M$, the pushout (\ref{diagram:pushout}) has $U(h)$ in $\cat{K}$ (resp. in $\cat{K}$ and a weak equivalence). A monad will be called \textit{$\cat{K}$-semi admissible} if this holds for pushouts into cofibrant $T$-algebras $A$. A monad will be called \textit{$\cat{K}$-semi admissible over $\M$} if this holds for pushouts into $T$-algebras $A$ for which $UA$ is cofibrant in $\M$.
\end{defn}

The value of this definition is that it allows for transferred model structures and transferred semi-model structures (to be discussed below) on $\algtm$:

\begin{theorem} \label{thm:admissible-implies-(semi)-models}
Let $T$ be a finitary, $\cat{K}$-admissible (resp. $\cat{K}$-semi-admissible, resp. $\cat{K}$-semi-admissible over $\M$) monad on a $\cat{K}$-compactly generated model category $\M$ for some saturated class of morphisms $\cat{K}$. Then $\algtm$ admits a transferred model structure (resp. semi-model structure, resp. semi-model structure over $\M$).
\end{theorem}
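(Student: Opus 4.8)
The plan is to verify the model-category (resp. semi-model-category) axioms for the transferred structure on $\algtm$, where weak equivalences and fibrations are created by $U$. Since $T$ is finitary and $\M$ is cofibrantly generated, the adjunction $(T,U)$ supplies candidate generating (trivial) cofibrations $T(I)$ and $T(J)$, and the transferred structure is forced: a map is a weak equivalence (fibration) iff its image under $U$ is. The formal skeleton of the transfer — $2$-out-of-$3$, retracts, existence of limits and colimits in $\algtm$, and the identification of fibrations with $T(J)$-injectives — follows from the standard recognition theorem (Kan's lifting criterion), so I would dispatch these quickly. The real content is the acyclicity of the generating trivial cofibrations: I must show that every relative $T(J)$-cell complex is a weak equivalence in $\M$, and that $T(J)$-cell maps are in particular cofibrations, which is precisely what $\cat{K}$-admissibility is designed to deliver.

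The core step is therefore the analysis of a single pushout of the form (\ref{diagram:pushout}) along $T(u)$ for a trivial cofibration $u\colon K\to L$. Here I would invoke the standard filtration of the forgetful image $U(h)\colon A\to B$ as a transfinite composition in $\M$, built from the free-monad combinatorics: the successive stages are obtained by pushouts in $\M$ involving the $n$-fold pushout-product powers of $u$ against symmetrized tensor powers of $A$. By hypothesis $T$ is $\cat{K}$-admissible, so each stage has underlying map in the saturated class $\cat{K}$ and is a weak equivalence; the $\cat{K}$-compact generation of $\M$ then guarantees that weak equivalences are closed under the relevant filtered colimits along $\cat{K}$, so the transfinite composite $U(h)$ is itself a trivial cofibration in $\M$. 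This shows the pushout $h$ is a trivial cofibration in the transferred structure, and transfinite composition and retract closure then extend this to all of $T(J)$-cell.

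Next I would address the smallness needed to run Quillen's small object argument inside $\algtm$: because $\M$ is $\cat{K}$-compactly generated, every object is small relative to $\cat{K}$-cell, and $U$ preserves filtered colimits (as $T$ is finitary and $U$ is a right adjoint that creates them here), so the domains of $T(I)$ and $T(J)$ are small in $\algtm$ relative to the transferred cell classes. With acyclicity and smallness in hand, the small object argument produces the required functorial factorizations, and the lifting axioms follow from the adjunction together with the identification of fibrations as $T(J)$-injectives. This yields the full transferred model structure in the $\cat{K}$-admissible case.

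For the weaker hypotheses, the argument is identical in structure but restricts where the acyclicity input is available: under $\cat{K}$-semi-admissibility the filtration analysis only applies to pushouts into cofibrant $T$-algebras $A$, and under $\cat{K}$-semi-admissibility over $\M$ only to those $A$ with $UA$ cofibrant. In each case one obtains not the full factorization/lifting axioms globally, but only the versions valid for (semi-model) structures — namely that factorizations and the two-out-of-three lifting properties hold when the relevant objects are cofibrant. I expect the main obstacle to be the careful bookkeeping in the pushout filtration, in particular verifying that the associated graded maps land in $\cat{K}$ and are weak equivalences precisely under the admissibility hypothesis being assumed; everything else is a routine application of the transfer principle and the small object argument. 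For the semi-model variants one must also track that no step secretly requires cofibrancy of an object that is not yet known to be cofibrant, which is where the distinction between the three admissibility notions becomes delicate.
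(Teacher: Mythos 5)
The paper offers no proof of this statement at all: it is quoted from the literature, with the model-structure case cited to Theorem~2.11 of \cite{batanin-berger} and the semi-model cases to 12.1.4 and 12.1.9 of \cite{fresse-book}. Your outline is essentially the standard transfer argument that those references carry out: generating sets $T(I)$, $T(J)$, Kan's recognition theorem, smallness from $\cat{K}$-compact generation and finitariness of $T$, and acyclicity of $T(J)$-cell as the one substantive point, with cofibrancy bookkeeping distinguishing the three variants. So the overall architecture is right.

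Two inaccuracies are worth flagging. First, you misplace the role of the filtration. $\cat{K}$-admissibility is, by definition, the statement that for a single pushout (\ref{diagram:pushout}) the \emph{whole} map $U(h)$ lies in $\cat{K}$ (and is a weak equivalence when $u$ is trivial); it says nothing about stages of a filtration of $U(h)$, and it is not something you derive from a filtration --- rather, the filtration by pushout-product powers is the technique one uses to \emph{verify} admissibility for particular monads (free operad-algebra functors, polynomial monads). Invoking the filtration and then justifying its stages ``by admissibility'' is circular as written; in the present theorem the hypothesis simply hands you the conclusion for each cell attachment, and the only remaining work is the passage to transfinite composites, which is exactly what $\cat{K}$-compact generation (smallness relative to $\cat{K}$-cell, and closure of weak equivalences under filtered colimits along $\cat{K}$) is there for. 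Second, your assertion that the transfinite composite $U(h)$ is ``a trivial cofibration in $\M$'' overstates what is true and what is needed: $\cat{K}$ is an arbitrary saturated class, not the cofibrations, and the argument only requires that $U$ of a relative $T(J)$-cell complex be a weak equivalence lying in $\cat{K}$. Neither slip breaks the proof, since the needed conclusions are still available, but the reasoning should be repaired. For the semi-model variants, one must also check that cofibrancy of the algebra (resp.\ of its underlying object) is preserved by each cell attachment so that the transfinite induction can continue; you gesture at this, and it is where the three admissibility notions genuinely diverge.
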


This theorem is proven as Theorem 2.11 of \cite{batanin-berger} for model structures, and as 12.1.4 and 12.1.9 in \cite{fresse-book} for semi-model structures. What we call semi-model structures in this paper are called $J$-semi model structures in \cite{spitzweck-thesis}. What we call semi-model structures over $\M$ (following Spitzweck's terminology) are called relative semi-model structures in \cite{fresse-book}, but note that \cite{fresse-book} has a weaker notation of semi-model structure, called an $(I,J)$-semi model structure in \cite{spitzweck-thesis}. If $\M$ is a monoidal model category, algebras over any $\Sigma$-cofibrant colored operad inherit a transferred semi-model structure \cite{gutierrez-rondigs-spitzweck-ostvaer} [Theorem A.8]. Under mild conditions, algebras over entrywise cofibrant colored operads inherit transferred semi-model structures \cite{white-yau} [Theorem 6.2.3]. In the absence of the monoid axiom, monoids inherit a transferred semi-model structure \cite{hovey-monoidal} [Theorem 3.3]. In the presence of the commutative monoid axiom but the absence of the monoid axiom, commutative monoids inherit a transferred semi-model structure \cite{white-commutative-monoids} [Corollary 3.8]. So semi-admissibility for a monad $T$ occurs much more frequently than admissibility, and we will provide an example at the end of \ref{subsec:semi} of category of algebras that only admits a semi-model structure, not a full model structure.

\subsection{Semi-Model Categories} \label{subsec:semi}

The following definition is from \cite{spitzweck-thesis}, where it is called a $J$-semi model category over $\M$.

\begin{definition}
\label{defn:semi-model-cat}
Assume there is an adjunction $F:\M \rightleftarrows \cat{D}:U$ where $\M$ is a cofibrantly generated model category, $\cat{D}$ is bicomplete, and $U$ preserves colimits over non-empty ordinals. 

We say that $\cat{D}$ is a \textbf{semi-model category} if $\cat{D}$ has three classes of morphisms called \emph{weak equivalences}, \emph{fibrations}, and \emph{cofibrations} such that the following axioms are satisfied. A \emph{cofibrant} object $X$ means an object in $\cat{D}$ such that the map from the initial object of $\cat{D}$ to $X$ is a cofibration in $\cat{D}$.

\begin{enumerate}
\item $U$ preserves and reflects fibrations and trivial fibrations.
\item $\cat{D}$ satisfies the 2-out-of-3 axiom and the retract axiom.
\item Cofibrations in $\cat{D}$ have the left lifting property with respect to trivial fibrations. Trivial cofibrations in $\cat{D}$ whose domain is cofibrant have the left lifting property with respect to fibrations.
\item Every map in $\cat{D}$ can be functorially factored into a cofibration followed by a trivial fibration. Every map in $\cat{D}$ whose domain is cofibrant can be functorially factored into a trivial cofibration followed by a fibration.
\item The initial object in $\cat{D}$ is cofibrant.
\item Fibrations and trivial fibrations are closed under pullback.
\end{enumerate}

Denote by $I'$-inj the class of maps that have the right lifting property with respect to maps in $I'$. $\cat{D}$ is said to be \textit{cofibrantly generated} if there are sets of morphisms $I'$ and $J'$ in $\cat{D}$ such that the following conditions are satisfied. 
\begin{enumerate}
\item
$I'$-inj is the class of trivial fibrations.
\item
$J'$-inj is the class of fibrations in $\cat{D}$.
\item
The domains of $I'$ are small relative to $I'$-cell.
\item
The domains of $J'$ are small relative to maps in $J'$-cell whose domain is cofibrant.
\end{enumerate}

If, in the definition above, cofibrancy in $\cat{D}$ is weakened to cofibrancy in $\M$, then the structure on $\cat{D}$ is called a \textbf{semi-model category over $\M$}. 
\end{definition}

In this paper, we will often transfer a semi-model structure from $\lcm$ to $\algtlcm$. Following Definition \ref{defn:transferred}, this means $U$ preserves and reflects weak equivalences and fibrations. The most general setting for such a transfer to exist is that given by Theorem \ref{thm:admissible-implies-(semi)-models} applied to $\lcm$. We work in this general setting, but the reader is encouraged to keep the following examples in mind.

\begin{example}
Suppose $\M$ is a monoidal model category, e.g. simplicial sets, compactly generated spaces, chain complexes, symmetric spectra, or (equivariant) orthogonal spectra. If $L_\cat{C}(\M)$ satisfies the pushout product axiom, $L_\cat{C}$ is called a \textit{monoidal Bousfield localization}. Conditions to guarantee this are given in \cite{white-thesis} and worked out for the model categories just listed, as well as counterexamples demonstrating it does not come for free.

For monoidal Bousfield localizations, $\algtlcm$ inherits a transferred semi-model structure whenever $T$ comes from a $\Sigma$-cofibrant colored operad. Similar results hold for entrywise cofibrant colored operads \cite{white-yau}. Results of this nature have been proven for commutative monoids in \cite{white-commutative-monoids}, and are recalled in Example \ref{example:comm-mon-sym}. In \cite{white-yau-coloc}, conditions are given so that such transfers exist in a right Bousfield localization $R_K(\M)$.
\end{example}

Note that another source of semi-model categories is as left Bousfield localizations of other semi-model categories. This theory has been worked out by the second author in \cite{white-bous-loc-semi}. We mention the results of \cite{white-bous-loc-semi} at various places in this document, but never require them for any of our proofs.

We conclude this section with an example of a semi-model structure that is not a full model structure, to demonstrate that semi-model categories are inescapable when one studies algebras over operads.

\begin{example}
Consider the colored operad $P$ whose algebras are non-reduced symmetric operads. Consider $P$-algebras in $Ch(\F_2)$ with the projective model structure, from \cite{hovey-book}. Because $P$ is $\Sigma$-cofibrant and $\M$ is a cofibrantly generated monoidal model category, there is a transferred semi-model structure on $P$-alg, by Theorem 6.3.1 in \cite{white-yau} (this result goes back to \cite{spitzweck-thesis}). This transferred structure does not form a full model structure, as we now show. Let Com be the operad for commutative differential graded algebras, so Com(n) is $\F_2$ with the trivial $\Sigma_n$-action for all $n$.

For any acyclic complex $C$, the inclusion map $0\to C$ is a trivial cofibration. Define a collection $K$ to have $K_0 = C$ and $K_i = 0$ otherwise. Then the inclusion from the 0 collection to $K$ is a trivial cofibration. Thus, $P(0)\to P(K)$ is supposed to be a trivial cofibration, so pushouts of it would need to be weak equivalences if $P$-alg was a model category. Yet in
\begin{align*}
\xymatrix{P(0)\ar[r] \ar[d] & P(K) \ar[d] \\ Com \ar[r] & P(K)\coprod Com}
\end{align*}
there are choices of $C$ such that the bottom map is not a weak equivalence, because it contains a summand of $C\otimes C / \Sigma_2$. We now demonstrate an explicit example of $C$ such that $H_1(C\otimes C / \Sigma_2) \cong \F_2$ is not contractible. To ease notation, let $k = \F_2$.

Let C be the complex $0 \to k \to k \oplus k \to k\oplus k \to k \to 0$ where the differential $d_3$ takes a to (a,a), $d_2$ takes (a,b) to (a+b,a+b), and $d_1$ takes (a,b) to a+b. Observe that 
\begin{align*}
(C\otimes C)_n \cong \begin{cases} (k \cong k)\otimes_k k & \mbox{ if } n=0\\
(k\otimes_k (k\oplus k)) \oplus ((k\oplus k)\otimes k) \cong k^{\oplus 4} & \mbox{ if } n=1\\
 (k \otimes_k (k\oplus k)) \oplus ((k\oplus k)\otimes_k (k\oplus k)) \oplus ((k\oplus k)\otimes_k k) \cong k^8 & \mbox{ if } n=2\\
\end{cases}
\end{align*}

The differential $d_1$ takes $(u \otimes (a,b),(c,d)\otimes w)$ to $ua+ub+cw+dw$, while $d_2$ takes $(u\otimes (a,b),(e,f)\otimes (s,t),(c,d)\otimes w)$ to $(ua+ub+es+et+fs+ft,ua+ub+es+et+fs+ft,wc+wd+es+et+fs+ft,wc+wd+es+et+fs+ft)$. 

Now consider $(C\otimes C)/\Sigma_2$, where the $\Sigma_2$ action is induced by swapping $C_p$ and $C_q$ in the formula $\bigoplus_{n=p+q} C_p\otimes_k C_q$. The action on the degree 1 part swaps the first two coordinates and swaps the second two coordinates. The differential $d_1$ is an epimorphism, while im$(d_2)$ lies in the the $\Sigma_2$-invariant subspace, hence goes to zero when we pass to coinvariants. It follows that $H_1(C\otimes C/\Sigma_2) \cong k$.

This example demonstrates that symmetric operads in Ch$(\F_2)$ is not a full model structure, since pushouts of trivial cofibrations need not be weak equivalences. This example does not rule out the transferred semi-model structure because Com is not cofibrant, and only pushouts of trivial cofibrations into cofibrant objects need to be again trivial cofibrations for semi-model categories.
\end{example}

\section{Algebras in a localized category}
\label{sec:alg-in-loc}

Suppose $\M$ is a cofibrantly generated model category, $\cat{C}$ is a class of morphisms such that $L_\cat{C}(\M)$ exists, and $T$ is a (finitary) monad on $\M$ such that $\algtm$ inherits a transferred model structure from $\M$. 

\begin{defn} \label{defn:local-proj-model}
A \textit{local projective (semi)-model structure} on $\algtm$ is the lifting of $\lcm$ along the forgetful functor $U:\algtm \to \M.$  We will denote this lifting $\algtlcm$.
\end{defn}

\begin{defn} We will call an algebra $Z\in \algtm$ \textit{local} if $U(Z)$ is a local object in $\M.$ 
\end{defn}

\begin{lemma}  $Z\in \algtm$ is a local object in $\algtm$ with respect to $T(\cat{C})$ if and only if $Z$ is a local algebra. \end{lemma}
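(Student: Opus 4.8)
The plan is to leverage the free--forgetful Quillen adjunction $T : \M \rightleftarrows \algtm : U$ to identify homotopy function complexes in $\algtm$ with homotopy function complexes in $\M$, after which the two notions of locality coincide on the nose.

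First I would record that, because the model structure on $\algtm$ is transferred, $U$ preserves and reflects fibrations and weak equivalences; hence $T \dashv U$ is a Quillen adjunction with $T$ left Quillen, and $T$ sends cofibrant objects to cofibrant objects (it preserves the initial object and cofibrations, so $T(\emptyset) \to T(X)$ is a cofibration). In particular $Z$ is fibrant in $\algtm$ if and only if $U(Z)$ is fibrant in $\M$, so the fibrancy requirement built into the definition of a local object agrees on the two sides.

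The heart of the argument is the natural weak equivalence of function complexes
\[
\map_{\algtm}(T(X), Z) \simeq \map_{\M}(X, U(Z))
\]
valid for $X$ cofibrant in $\M$ and $Z$ fibrant in $\algtm$. This is the standard equivalence of homotopy function complexes attached to a Quillen pair; concretely it arises by choosing a cosimplicial frame $X^\bullet$ on $X$, observing that $T(X^\bullet)$ is a cosimplicial frame on the cofibrant algebra $T(X)$ since $T$ is left Quillen, and then applying the adjunction isomorphism $\hom_{\algtm}(T(X^n), Z) \cong \hom_{\M}(X^n, U(Z))$ in each simplicial degree. What matters for the rest of the proof is that this equivalence is natural in the variable $X$.

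Finally I would feed each $g : X \to Y$ in $\cat{C}$ into this naturality. Since $\cat{C}$ consists of maps between cofibrant objects, both $X$ and $Y$ are cofibrant, so the naturality square identifies $\map_{\algtm}(T(g), Z)$ with $\map_{\M}(g, U(Z))$. Therefore $\map_{\algtm}(T(g), Z)$ is a weak equivalence for every $g \in \cat{C}$ if and only if $\map_{\M}(g, U(Z))$ is a weak equivalence for every $g \in \cat{C}$. Combined with the matching of fibrancy, this is exactly the statement that $Z$ is $T(\cat{C})$-local in $\algtm$ if and only if $U(Z)$ is $\cat{C}$-local in $\M$, i.e. $Z$ is a local algebra. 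The step I expect to be the main obstacle is pinning down the mapping-space equivalence together with its naturality, rather than the bookkeeping around it; the key enabling observation is that $T(X)$ is cofibrant whenever $X$ is, so the frame on $T(X)$ has cofibrant domain and the degreewise adjunction applies. This is also what makes the same argument go through verbatim when $\algtm$ carries only a transferred semi-model structure, as in the main theorem.
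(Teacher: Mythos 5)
Your proposal is correct and follows essentially the same route as the paper: both arguments reduce the claim to the degreewise adjunction isomorphism between function complexes, $\hom_{\algtm}(F(V^n),Z)\cong\hom_{\M}(V^n,U(Z))$, together with the observation that the transferred (semi-)model structure makes $U$ match up fibrancy and the relevant resolutions. The only cosmetic difference is that the paper computes the mapping spaces using a simplicial resolution $Z_*$ of the target (noting $U(Z_*)$ is a simplicial resolution of $U(Z)$), whereas you use a cosimplicial frame on the source; these are interchangeable here.
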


\begin{proof}
An algebra $Z$ is a local  object if and only if for any $f\in T(\cat{C}), f:X\to Y$ between cofibrant algebras, the map 
$$Map_{\algtm}(f,Z): Map_{\algtm}(X,Z) \leftarrow Map_{\algtm}(Y,Z) $$ 
is a weak equivalence of simplicial sets. Since $X$ and $Y$ are cofibrant algebras, the function complexes  $Map_{\algtm}(X,Z), Map_{\algtm}(Y,Z)$ can be constructed as   simplicial sets
$\algtm(X,Z_*)$ and $ \algtm(Y,Z_*)$ where $Z_*$ is a simplicial resolution of $Z$ in the category of algebras. Since $U$ preserves limits, fibrations, and weak equivalences, $U(Z_*)$ is a simplicial resolution of $U(Z).$

Finally, since $f = T(g)$ for $g: V\to W$, by adjointness we have that $Map_{\algtm}(f,Z)$ is a weak equivalence  if and only if the map
$$Map_{\M}(g,U(Z)): Map_{\M}(V,U(Z)) \leftarrow Map_{\M}(W,U(Z)) $$ 
is a weak equivalence. This occurs if and only if $U(Z)$ is a  local object in $\M$, as required. 
\end{proof}

\begin{theorem}\label{thm:existence-implies-locAlg=algLoc}
Suppose the transferred model structures $\algtm$ and Alg$_T(L_\cat{C}(\M))$ exist. Then the localization $\ltcalgtm$ exists and coincides with $\algtlcm$. Furthermore, if $\algtm$ and $\algtlcm$ exist as semi-model categories then the local projective semi-model structure $\ltcalgtm$ exists and coincides with the semi-model structure $\algtlcm$.
\end{theorem}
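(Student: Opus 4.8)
The plan is to prove that the transferred structure $\algtlcm$ already \emph{is} the left Bousfield localization of $\algtm$ at $T(\cat{C})$; this at once establishes that $\ltcalgtm$ exists and that it coincides with $\algtlcm$. The guiding principle is that a left Bousfield localization of a (semi-)model category, relative to a fixed class of cofibrations, is pinned down by its class of fibrant objects, so the task reduces to matching cofibrations and fibrant objects and to checking that the maps in $T(\cat{C})$ become weak equivalences.

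First I would match cofibrations. The transferred structure on $\algtlcm$ is generated by $T(I)$, where $I$ is a generating set of cofibrations of $\lcm$; since left Bousfield localization does not alter cofibrations, this $I$ is also a generating set for $\M$, so $T(I)$ generates the cofibrations of $\algtm$ as well. As $\ltcalgtm$ by construction retains the cofibrations of $\algtm$, all the structures in play have the same cofibrations, hence the same trivial fibrations and the same cofibrant replacements.

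Next I would identify the fibrant objects and invert $T(\cat{C})$. By definition of the transferred structure, $Z$ is fibrant in $\algtlcm$ iff $U(Z)$ is fibrant in $\lcm$, i.e. iff $U(Z)$ is $\cat{C}$-local, i.e. iff $Z$ is a local algebra; by the preceding Lemma these are precisely the $T(\cat{C})$-local objects. I would then note that the identity $\algtm \to \algtlcm$ is left Quillen: it preserves cofibrations, and a trivial cofibration of $\algtm$ has $U$-image a weak equivalence of $\M$, hence a $\cat{C}$-local equivalence, so it remains a trivial cofibration in $\algtlcm$. Finally, each $T(g)$ with $g\colon X\to Y$ in $\cat{C}$ is a weak equivalence in $\algtlcm$: since $X,Y$ are cofibrant the free algebras $T(X),T(Y)$ are cofibrant, and for every fibrant (hence local) $Z$ the free-forgetful adjunction gives $\mathrm{Map}_{\algtm}(T(g),Z)\cong \mathrm{Map}_{\M}(g,U(Z))$, which is a weak equivalence because $U(Z)$ is $\cat{C}$-local; inducing weak equivalences on mapping spaces into all fibrant objects forces $T(g)$ to be a weak equivalence by saturation.

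To conclude I would show directly that the weak equivalences of $\algtlcm$ are exactly the $T(\cat{C})$-local equivalences. For a cofibrant source and a fibrant (local) target the homotopy relation is detected by a single cylinder object taken from $\M$ (still valid in $\algtlcm$, since $\M$-equivalences are $\cat{C}$-local equivalences), so $\mathrm{Map}_{\algtlcm}(A,Z)$ agrees with the homotopy function complex computed in $\algtm$; since $f$ is an $\algtlcm$-weak-equivalence iff it induces weak equivalences on these mapping spaces into all local $Z$ (saturation and Yoneda in $\mathrm{Ho}(\algtlcm)$), and the $T(\cat{C})$-local equivalences are defined by the same condition once the Lemma identifies the local objects, the two classes coincide. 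Hence $\ltcalgtm$ exists and equals $\algtlcm$. The semi-model assertion follows by the identical argument, with the model-categorical localization theory replaced by its semi-model analogue from \cite{white-bous-loc-semi}; note that the adjunction step only ever maps out of the cofibrant objects $T(X),T(Y)$, so the cofibrancy restrictions in the semi-model axioms are respected. The main obstacle is precisely this last matching step: one must verify that the homotopy function complexes detecting $T(\cat{C})$-local equivalences, computed in the base category $\algtm$, agree with the derived mapping spaces into local objects computed in $\algtlcm$ — and this is exactly where equality of cofibrations (hence of cofibrant replacements and of the relevant cylinder objects) together with the fact that $U$ preserves weak equivalences are indispensable.
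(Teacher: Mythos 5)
Your proposal is correct and follows essentially the same route as the paper's proof: establish that the cofibrations of $\algtm$ and $\algtlcm$ coincide (the paper does this by showing the right adjoint of the identity preserves fibrations and trivial fibrations, you by comparing the generating sets $T(I)$), identify the fibrant objects of $\algtlcm$ with the fibrant local algebras via the Lemma, and then match the homotopy function complexes computed in the two structures so that the weak equivalences of $\algtlcm$ are exactly the $T(\cat{C})$-local equivalences. The one place you are lighter than the paper is the semi-model case: there the mapping-space criterion (Barwick's Scholium 3.64) only detects weak equivalences between \emph{cofibrant} algebras, and the paper completes the argument for a general map $f$ by passing to a cofibrant replacement $Qf$ in $\algtlcm$, noting the replacement maps are trivial fibrations in every structure in sight, and invoking two-out-of-three for $T(\cat{C})$-local equivalences; your argument should include this reduction rather than only remarking that the adjunction step respects cofibrancy.
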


This result means that, of the two ways of going around the following diagram to study local algebras, the ability to go counterclockwise (localize then transfer) implies the ability to go clockwise (transfer then localize). The opposite is not true: there are examples where one can go clockwise, but not counterclockwise (see Remark \ref{remark:semi-loc} below).

\begin{align*}
\xymatrix{
\algtm \ar@{|->}[r]^(.3){\ltc} & \algtlcm = \ltcalgtm \\
\M \ar@{|->}[u] \ar@{|->}[r]_\lc & \lcm \ar@{|->}[u]}
\end{align*}

The importance of semi-model structures is that transfers to categories of algebras often only result in semi-model structures (see \cite{white-yau}), especially when one is transferring from $\lcm$ where one has lost control over the trivial cofibrations. We provide an example at the end of this section for a semi-model structure on $\algtm$ that is provably not a model structure.

\begin{proof}
We first focus on the situation of model structures, delaying discussion of semi-model structures until the end of the proof. We first show that the identity functor $Id:\algtm \rightarrow \algtlcm$ is a left Quillen functor. It is sufficient to prove that  its inverse $Id^{-1}:\algtlcm \rightarrow \algtm$ maps (trivial) fibrations to (trivial) fibrations. A fibration in $\algtlcm$ is a morphism $f:X\rightarrow Y$ such that $U(f)$ is a fibration in $\lcm$. Hence, $Id^{-1}(U(f))$ is also a fibration in $\M$ because $Id^{-1}:\lcm^{loc}\rightarrow \M$ is a right Quilen functor. Therefore, $Id^{-1}(f)$ is also a fibration in $\algtm$ because $\algtm$ carries a transferred model structure. The same argument applies for trivial fibrations.

As a consequence, the class of cofibrations in $\algtm$ coincides with the class of cofibrations in $\algtlcm$. In addition, an adjunction argument shows that the fibrant objects in $\algtlcm$ are exactly fibrant $T(\cat{C})$-local algebras. 

Finally, $f:X\rightarrow Y$ is a $T(\cat{C})$-local equivalence in $\algtm$ if and only if it induces a weak equivalence between simplicial sets:
$$Map_{\algtm}(X,Z)= \algtm(X^*,Z)\leftarrow \algtm(Y^*,Z) = Map_{\algtm}(Y,Z)$$
for all fibrant local objects $Z\in \algtm$, where $X^*,Y^*$ are cosimplicial resolutions of $X$ and $Y$ \cite{barwickSemi} (Scholium 3.64). But any cosimplicial resolution in $\algtm$ is also a cosimplicial resolution in $\algtlcm$ because they have the same class of cofibrations, and because any weak equivalence in $\algtm$ is also a weak equivalence in $\algtlcm.$ Hence, we have
$$Map_{\algtm}(X,Z) = \algtm(X^*,Z) = \algtlcm(X^*,Z) = Map_{\algtlcm}(X,Z)$$
for any local fibrant object  $Z$ since it is also a fibrant object in $\algtlcm.$ So, $f$ induces a weak equivalence
  $$Map_{\algtlcm}(X,Z)\leftarrow Map_{\algtlcm}(Y,Z)$$
  for any fibrant object $Z$ in $\algtlcm$. Hence, $f$ is a weak equivalence in $\algtlcm$, as required. 

For semi-model categories, precisely the same proof works. As the first paragraph focuses entirely on fibrations and trivial fibrations, it applies verbatim to semi-model structures and proves that the classes of fibrations in these two semi-model structures on $\algtm$ coincide. The second paragraph proves that cofibrations coincide, by lifting. This works for semi-model categories as well.
Lastly, to prove that weak equivalences coincide, note that Scholium 3.64 in \cite{barwickSemi} is written to work for semi-model categories (there called left model categories), and that the machinery to carry out cosimplicial resolution for cofibrant objects is also provided (indeed, goes back to \cite{spitzweck-thesis}). Thus, maps $f:X\to Y$ between cofibrant objects are weak equivalences in $\algtlcm$. For a general $T(\cat{C})$-local equivalence $f:A\to B$, replace $f$ by its cofibrant replacement $Qf$ in $\algtlcm$. Note that the maps $QA\to A$ and $QB\to B$ are trivial fibrations in $\algtlcm$, hence in $\lcm$, hence in $\M$. So the two out of three property for $T(\cat{C})$-local equivalences (which holds independently of the existence of a semi-model structure on $\ltcalgtm$) shows that $Qf$ is a $T(\cat{C})$-local equivalence. The argument just given proves that $Qf$ is a weak equivalence in $\algtlcm$, as are the maps $QA\to A$ and $QB\to B$. Thus, by the two out of three property, so is $f$, completing our proof.
\end{proof}

The hypotheses of this Theorem are satisfied for the free monoid monad if $L_\cat{C}(\M)$ satisfies the pushout product and monoid axioms. Conditions on $\M$ and $\cat{C}$ are provided in \cite{white-localization} so that this occurs. For $T=\Sym$ one must also prove that $L_\cat{C}(\M)$ satisfies the commutative monoid axiom, but again conditions for this to occur are given in \cite{white-localization}.

We now characterize when this lifted model structure exists. When we say $U$ preserves local equivalences we mean $U$ takes $T(\cat{C})$-local equivalences to $\cat{C}$-local equivalences.

\begin{theorem}\label{thm:batanin-big-iff} 
Let $\M$ be a $\cat{K}$-compactly generated model category where $\cat{K}$ is a saturated class in $\M.$ Let $T$ be a $\cat{K}$-admissible monad on $\M.$ Assume the localization $\lcm$ is a $\cat{K}$-compactly generated monoidal model category and, moreover, the domains of generating trivial cofibrations in $\lcm$ are cofibrant (for example, if the domains are cofibrant in $\M$). Assume also that the projective structure on $\algtm$ is left proper.  

Then the transferred model structure $\algtlcm$ exists if and only if  $U$ preserves local weak equivalences.
\end{theorem}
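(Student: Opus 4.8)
The statement is an ``if and only if,'' so I would prove the two directions separately, and I expect the forward direction (existence $\Rightarrow$ $U$ preserves local equivalences) to be the easy one and the converse to be the substantive content. For the forward direction, suppose the transferred model structure $\algtlcm$ exists. Then by Theorem \ref{thm:existence-implies-locAlg=algLoc} it coincides with the localization $\ltcalgtm$, so in particular the identity $\algtm \to \algtlcm$ is left Quillen and the weak equivalences of $\algtlcm$ are exactly the $T(\cat{C})$-local equivalences. Now the composite $\algtlcm \xrightarrow{U} \lcm$ is built from the transfer, and I would check that $U$ is a left Quillen functor from $\algtlcm$ to $\lcm$ (it takes cofibrations to cofibrations and preserves trivial cofibrations, since these are detected on free algebras $T(\cat{C})$, which $U$ sends into $\cat{C}$-cells by adjunction). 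A left Quillen functor preserves weak equivalences between cofibrant objects, and the standard cofibrant-replacement argument then upgrades this to ``$U$ preserves all $T(\cat{C})$-local equivalences,'' using that $UQ \to U$ is a $\cat{C}$-equivalence.

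For the converse — the harder and more interesting direction — assume $U$ preserves local weak equivalences, and build the transferred structure via Theorem \ref{thm:admissible-implies-(semi)-models}. The generating cofibrations and generating trivial cofibrations of $\algtlcm$ should be $T(I)$ and $T(J_{\cat{C}})$, where $J_{\cat{C}}$ generates the trivial cofibrations of $\lcm$. The plan is to verify that $T$ is $\cat{K}$-admissible \emph{for the localized structure} $\lcm$, i.e.\ that a pushout of $T(u)$ along a map of algebras (diagram (\ref{diagram:pushout})) with $u$ a generating $\lcm$-trivial-cofibration produces an $h$ whose image $U(h)$ lies in $\cat{K}$ and is a $\cat{C}$-local equivalence. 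The membership $U(h)\in\cat{K}$ follows from the hypothesis that $T$ is already $\cat{K}$-admissible on $\M$ together with $\lcm$ and $\M$ having the same cofibrations and the same underlying class $\cat{K}$. The crux is that $U(h)$ is a $\cat{C}$-local equivalence.

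To establish that, I would filter the map $h\colon A\to B$ as a transfinite composition of pushouts in $\M$, which is available because $T$ is finitary and $\lcm$ is $\cat{K}$-compactly generated; since $\cat{C}$-local equivalences are closed under transfinite composition along $\cat{K}$, it suffices to treat each filtration stage. Here the hypothesis that the \emph{domains} of the generating trivial cofibrations of $\lcm$ are cofibrant is what lets the filtration layers be analyzed (the successive quotients involve tensor/symmetric-power terms on a cofibrant domain), and left properness of the projective structure on $\algtm$ is what guarantees the relevant pushouts in the filtration remain weak equivalences. The key input ``$U$ preserves local equivalences'' is precisely what forces each layer, hence $U(h)$, to be a $\cat{C}$-local equivalence. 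The main obstacle I anticipate is controlling the filtration quotients: one must show that applying the monad $T$ and forming the pushout does not destroy the local-equivalence property layer by layer, and this is exactly where admissibility combines with the cofibrancy-of-domains hypothesis and left properness. Once $\cat{K}$-admissibility over $\lcm$ is in hand, Theorem \ref{thm:admissible-implies-(semi)-models} delivers the transferred structure $\algtlcm$, completing the converse.
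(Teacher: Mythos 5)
Your forward direction reaches the right conclusion but by a flawed route: $U$ is a right adjoint and in general does \emph{not} take cofibrations of algebras to cofibrations of $\M$ (that is the extra ``relatively cofibrant'' hypothesis appearing elsewhere in the paper), so the claim that $U\colon \algtlcm \to \lcm$ is left Quillen is unjustified and unnecessary. The argument is much shorter: once Theorem \ref{thm:existence-implies-locAlg=algLoc} identifies the weak equivalences of the transferred structure $\algtlcm$ with the $T(\cat{C})$-local equivalences, the very definition of a transferred structure (weak equivalences are created by $U$) says that $U$ sends these to $\cat{C}$-local equivalences.

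The serious gap is in the converse. You correctly reduce to showing that $T$ is $\cat{K}$-admissible over $\lcm$, and correctly isolate the crux (that $U(\tilde{f})$ is a $\cat{C}$-local equivalence), but the filtration strategy you propose cannot be carried out here and, more importantly, never actually uses the hypothesis. For a general $\cat{K}$-admissible monad $T$ there is no filtration of the pushout with identifiable layers of ``tensor/symmetric-power'' type --- that kind of analysis is available only for monads coming from operads and requires monoidality hypotheses not assumed in this theorem. You acknowledge that controlling the quotients is ``the main obstacle'' but supply no mechanism by which ``$U$ preserves local equivalences'' --- a statement about applying $U$ to $T(\cat{C})$-local equivalences \emph{of algebras} --- would force each layer in $\M$ to be a $\cat{C}$-local equivalence. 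The actual argument runs in the opposite order: left properness of the projective structure on $\algtm$ (a hypothesis you list but never genuinely use) makes the pushout a \emph{homotopy} pushout of algebras; applying $\Map_{\algtm}(-,Z)$ for $Z$ a local fibrant algebra and using the $F\dashv U$ adjunction identifies the map on the free side with $\Map_{\M}(f,U(Z))$, which is a trivial fibration because $f$ is a local trivial cofibration and $U(Z)$ is local fibrant; hence $\tilde{f}$ is a $T(\cat{C})$-local equivalence of algebras. Only at this last step does the hypothesis enter, transporting this to the conclusion that $U(\tilde{f})$ is a $\cat{C}$-local equivalence in $\M$. Without that two-step structure your converse does not close.
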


Note that conditions are given in \cite{white-localization} so that $L_\cat{C}(\M)$ satisfies the hypothesis of being a $\cat{K}$-compactly generated monoidal model category. Furthermore, Corollary 4.15 in \cite{white-localization} provides conditions under which cofibrancy for the domains of maps in $J$ can be deduced.

For clarity in the proof, we use $U^{loc}$ to denote the forgetful functor from $\algtlcm$ to $\lcm$. As a functor, this is of course the same as $U$ from $\algtm$ to $\M$.

\begin{proof}
If the transfer from $\lcm$ to $\algtlcm$ exists, then by Theorem \ref{thm:existence-implies-locAlg=algLoc} the transferred model structure coincides with the lifted localization. So, $U^{loc}$ preserves weak equivalences. Hence, $U$ preserves local weak equivalences.

Let us prove the converse. The hypotheses of the theorem allow us to use the  machinery from \cite{batanin-berger} to prove that the transfer exists if we can prove that $T$ is $\cat{K}$-admissible for $\lcm$. For this we need to analyze a pushout created from a generating trivial cofibration $f:K\rightarrow L$ in $\lcm$, and a morphism of algebras $g: F(K)\rightarrow X$:

\begin{equation}\label{pushout} 
\xymatrix{F(K) \ar[r]^{F(f)} \ar[d]_g & F(L) \ar[d] \\ X \ar[r]^{\tilde{f}} & P}
\end{equation}

We must prove that underlying morphism of $\tilde{f}$ belongs to $\cat{K}$ and is a local weak equivalence. Observe that $U(\tilde{f})$ is in $K$ because $T$ is $\cat{K}$-admissible. We must prove that it is a local weak equivalence. Since the transferred model structure on Alg$(T)$ is left proper (from \cite{batanin-berger}), the pushout (\ref{pushout}) is actually a homotopy pushout in Alg$(T).$ Let $Z$ be a local fibrant algebra, so that $U(Z)$ is a local fibrant object in $\lcm.$ Apply the simplicial mapping space functor $Map_{Alg(T)}(-,Z)$ to the pushout (\ref{pushout}). Since $F(K)$ and $F(L)$ are cofibrant, $Map_{Alg(T)}(-,Z)$ can be constructed as a levelwise $Hom$ functor $Alg(T)(-,Z_*),$ where $Z_*$ is a simplicial resolution of $Z.$ Note that $U(Z_*)$ is a simplicial resolution of $U(Z)$ in $\lcm$, so adjunction yields a homotopy pullback in simplicial sets:

\[
\xymatrix{
Map_\M(K,U(Z)) & & Map_\M(L,U(Z)) \ar[ll]_{Map_\M(f,U(Z))} \\ 
Map_{Alg(T)}(X,Z) \ar[u] && Map_{Alg(T)}(P,Z) \ar[u] \ar[ll]_{Map_{Alg(T)}(\tilde{f},Z)}
}
\]

Note that $Map_{\lcm}(f,U(Z))$ is a trivial fibration since $f$ is a local trivial cofibration and $U(Z)$ is a fibrant local object. Therefore, $Map_{Alg(T)}(\tilde{f},Z)$ is a trivial fibration for all $Z$, so $\tilde{f}$ is a local equivalence in $Alg(T).$ Since $U$ preserves local weak equivalences, $U(f)$ is a local equivalence and we have completed the proof.
\end{proof}

\begin{corollary} \label{cor:batanin-big-iff-semi}
Let $\M$ be a left proper, $\cat{K}$-compactly generated monoidal model category, for some saturated class of morphisms $\cat{K}$. Let $T$ be a $\cat{K}$-semi-admissible monad (so there is a transferred semi-model structure $\algtm$). Assume $\lcm$ exists as a monoidal model category and that the domains of its generating trivial cofibrations are cofibrant. Then the transferred semi-model structure on $\algtlcm$ exists if and only if $U$ preserves local weak equivalences. In addition, if cofibrant algebras are relatively cofibrant then we obtain on $\algtlcm$ a transferred semi-model structure over $\lcm$.
\end{corollary}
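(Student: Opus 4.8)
The plan is to mirror the proof of Theorem \ref{thm:batanin-big-iff}, adapting each step to semi-model categories. The one genuinely new ingredient is that $\cat{K}$-semi-admissibility only constrains pushouts into cofibrant algebras, and such pushouts are automatically homotopy pushouts; this lets me discard the left-properness hypothesis on $\algtm$ used in Theorem \ref{thm:batanin-big-iff}. For the forward implication I argue exactly as before: if the transferred semi-model structure $\algtlcm$ exists, then since $\cat{K}$-semi-admissibility already provides $\algtm$, the semi-model case of Theorem \ref{thm:existence-implies-locAlg=algLoc} identifies $\ltcalgtm$ with $\algtlcm$. The weak equivalences of $\ltcalgtm$ are the $T(\cat{C})$-local equivalences, whereas those of $\algtlcm$ are the maps inverted by $U$ into $\lcm$; comparing the two classes shows at once that $U$ carries $T(\cat{C})$-local equivalences to $\cat{C}$-local equivalences.

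For the converse I assume $U$ preserves local weak equivalences and reduce, through the transfer machinery of Theorem \ref{thm:admissible-implies-(semi)-models} applied to $\lcm$, to showing that $T$ is $\cat{K}$-semi-admissible as a monad on $\lcm$. Thus for each generating trivial cofibration $f\colon K\to L$ of $\lcm$ I must analyze a pushout
\[
\xymatrix{F(K) \ar[r]^{F(f)} \ar[d]_g & F(L) \ar[d] \\ X \ar[r]^{\tilde{f}} & P}
\]
formed along a map $g$ into a cofibrant algebra $X$, and prove that $U(\tilde f)$ lies in $\cat{K}$ and is a $\cat{C}$-local equivalence. Since trivial cofibrations of $\lcm$ are in particular cofibrations of $\M$, $\cat{K}$-semi-admissibility of $T$ over $\M$ hands me $U(\tilde f)\in\cat{K}$ immediately. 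The hypothesis that the domains of the generating trivial cofibrations of $\lcm$ are cofibrant makes $K$, hence also $L$, cofibrant, so $F(K)$ and $F(L)$ are cofibrant algebras and $F(f)$ is a cofibration between them.

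To see that $U(\tilde f)$ is a local equivalence I run the mapping-space argument of Theorem \ref{thm:batanin-big-iff}. Because $X$ is cofibrant and $F(f)$ is a cofibration between cofibrant algebras, the square is a homotopy pushout in $\algtm$ with no left-properness needed; applying $\Map_{\algtm}(-,Z)$ for a fibrant local algebra $Z$ — computed through a simplicial resolution $Z_*$ whose image $U(Z_*)$ resolves $U(Z)$ — and using adjunction to rewrite the mapping spaces out of $F(K),F(L)$ as $\Map_{\M}(K,U(Z))$ and $\Map_{\M}(L,U(Z))$ produces a homotopy pullback of simplicial sets whose top edge is the trivial fibration $\Map_{\M}(f,U(Z))$. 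Hence $\Map_{\algtm}(\tilde f,Z)$ is a trivial fibration for every such $Z$, so $\tilde f$ is a $T(\cat{C})$-local equivalence; since $U$ preserves local equivalences, $U(\tilde f)$ is a $\cat{C}$-local equivalence, establishing $\cat{K}$-semi-admissibility of $T$ for $\lcm$ and hence, by Theorem \ref{thm:admissible-implies-(semi)-models}, the structure $\algtlcm$. I expect the main obstacle to be the two homotopical facts invoked here: that the square is a homotopy pushout purely from cofibrancy of $X$ and $F(f)$, and that the simplicial-resolution and adjunction identification of mapping spaces remains legitimate in a semi-model category, where resolutions live only over cofibrant objects. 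Both should be secured by the cofibrancy of $X,F(K),F(L)$ together with Scholium 3.64 of \cite{barwickSemi}.

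Finally, for the addendum I upgrade the above to $\cat{K}$-semi-admissibility over $\lcm$, which by Theorem \ref{thm:admissible-implies-(semi)-models} yields the transferred semi-model structure over $\lcm$. Here the target $X$ is only $\M$-cofibrant rather than a cofibrant algebra, so the homotopy-pushout step is no longer automatic. The hypothesis that cofibrant algebras are relatively cofibrant lets me reduce to the settled case: I choose a cofibrant replacement $q\colon QX\to X$ in $\algtm$, so that $U(q)$ is a local equivalence and, by relative cofibrancy, $U(QX)$ is cofibrant; I lift $g$ through $q$, form the homotopy pushout $F(L)\amalg_{F(K)}QX$, and compare it with $P$. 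The previous paragraph already shows the underlying map out of $U(QX)$ is a local equivalence, so by two-out-of-three it suffices to show the comparison $U(F(L)\amalg_{F(K)}QX)\to U(P)$ is a local equivalence. The delicate point will be controlling this comparison through the underlying filtration of \cite{batanin-berger}; relative cofibrancy guarantees that every object feeding that filtration is cofibrant, which is exactly what the pushout-product axiom of the left proper category $\lcm$ requires to conclude the comparison is a local equivalence stagewise.
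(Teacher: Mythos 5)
Your proof of the main biconditional is correct and is essentially the paper's own argument: the forward direction via Theorem \ref{thm:existence-implies-locAlg=algLoc}, and the converse by rerunning the mapping-space argument of Theorem \ref{thm:batanin-big-iff} with $X$ a cofibrant algebra, observing that cofibrancy of $X$, $F(K)$, $F(L)$ together with $F(f)$ being a cofibration makes the square a homotopy pushout without any left-properness assumption on $\algtm$.

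For the addendum, however, you diverge from the paper and the step you land on is not secured. The paper's route is direct: when cofibrant algebras are relatively cofibrant, the objects $U(F(K))$, $U(F(L))$, $U(X)$ are all cofibrant in $\M$, and this is exactly the cofibrancy needed to conclude that the \emph{same} pushout square (now with $X$ only $\M$-cofibrant) is still a homotopy pushout in the semi-model structure over $\M$; the mapping-space argument then repeats verbatim, establishing $\cat{K}$-semi-admissibility of $T$ over $\lcm$ and hence the transferred structure over $\lcm$. Your detour through a cofibrant replacement $q\mathcolon QX\to X$ reduces everything to showing that the comparison $U(F(L)\amalg_{F(K)}QX)\to U(P)$ is a local equivalence, and this is where the gap sits: controlling that comparison ``through the underlying filtration of \cite{batanin-berger}'' presupposes an explicit filtration of $U(\tilde f)$ for the pushout into the merely $\M$-cofibrant $X$, which is available for polynomial monads or operadic monads but not for a general $\cat{K}$-semi-admissible monad $T$ as in the statement; and even where such a filtration exists, showing its stages are local equivalences is essentially the assertion you are trying to prove, so the reduction is circular rather than a simplification. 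The invocation of ``the pushout-product axiom of the left proper category $\lcm$'' does not apply as stated either, since nothing in the hypotheses gives left properness of $\lcm$. The fix is simply to drop the replacement argument and argue as in the first part, using relative cofibrancy only to certify that the pushout square is a homotopy pushout.
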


Note that conditions such that cofibrant algebras are relatively cofibrant are given in \cite{white-yau} (Theorems 6.2.3 and 6.3.1).

\begin{proof}
We proceed as in the proof of the theorem, but assume that the algebra $X$ is a cofibrant algebra. We form the pushout square 
\vspace{12mm}
\begin{equation}\label{cor:semi-pushout} 
\xymatrix{
F(K) \ar[r]^{F(f)} \ar[d]_g & F(L) \ar[d] \\
X \ar[r]^{\tilde{f}} & P
}
\end{equation}

and we note that it is a homotopy pushout square even if $\algtm$ is not left proper, because all objects are cofibrant in $\algtm$ and $F(f)$ is a cofibration of algebras. The rest of the proof follows precisely as in the theorem, using that $F(K)$ and $F(L)$ are cofibrant algebras. If cofibrant algebras are relatively cofibrant, then we can do the same proof starting with an algebra $X$ cofibrant in $\M$ and using that $F(K)$ and $F(L)$ are also cofibrant in $\M$ to deduce the pushout square is a homotopy pushout.
\end{proof}

\section{Localizing a category of algebras}
\label{sec:loc-in-alg-carato}

In the previous section we provided conditions so that we could first localize $\M$ at a class of maps $\cat{C}$ and then transfer the local (semi)-model structure to the category of $T$-algebras. An alternative way to study local $T$-algebras is to lift the localization $L_\cat{C}$ to a localization $\ltc$ on the category of algebras (where $T(\cat{C})$ are the free $T$-algebra maps on $\cat{C}$). Conditions for such lifts to exist have been found in \cite{carato}, along with an extensive discussion of colocalization and a comparison between $L_\cat{C}$ and $L_{U(T(\cat{C}))}$ with applications to classical localizations for spaces, abelian groups, and spectra. We will now summarize the main results of \cite{carato} that are related to our results on localizations, and then we deduce new results regarding preservation of $T$-algebra structure by $L_\cat{C}$ in the next section, building both on our work above and on \cite{carato}.

In \cite{carato}, the localizations under consideration are \textit{homotopical localizations}. Usually, such a setting would be a slight generalization of the notion of a left Bousfield localization, avoiding the need to assume there is a \textit{set} of maps being inverted. However, in \cite{carato} the focus is on homotopical localizations that come from left Bousfield localizations $L_f$, so the setting matches ours. A key concern of \cite{carato} is determining when a left Bousfield localization $L_\cat{C}$ on $\M$ lifts to a localization of algebras in the following sense:

\begin{defn} \label{defn:carato-lifting-defn}
We say that \textit{$L_\cat{C}$ lifts to the homotopy category of $T$-algebras} if there is an endofunctor $L^T$ on $\hoalgtm$ together with a natural isomorphism $h:L_\cat{C} U \to UL^T$ in $\ho(\M)$ such that $h \; \circ \; lU = Ul^T$ in $\ho(\M)$ where $l_X:X\to L_\cat{C}(X)$ and $l^T_E:E\to L^T(E)$ are the localization maps. 
\end{defn}

The main results from \cite{carato} that we will be interested in regard when forgetful functors preserve local equivalences (as in Theorem \ref{thm:existence-implies-locAlg=algLoc}) and when localizations lift to the category of algebras. The following two results appear in Section 7 of \cite{carato}, but have been reworded to match our situation.

\begin{lemma} \label{lemma:carato-beta-iff-U-preserves}
For a localization $L^T$ on Ho($\algtm$) and a localization $L$ on Ho($\M$), the functors $UL^T$ and $LU$ are naturally isomorphic if and only if $U:\hoalgtm \to \Ho(\M)$ preserves local objects and equivalences.
\end{lemma}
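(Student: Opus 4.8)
The plan is to prove both implications using the standard formalism of coaugmented idempotent localization functors on a category. Write $\eta\colon \mathrm{Id}\to L$ and $\eta^T\colon \mathrm{Id}\to L^T$ for the coaugmentations. Recall that the $L$-local objects are exactly those $W$ for which $\eta_W$ is an isomorphism, equivalently those isomorphic to some object in the image of $L$; the $L$-local equivalences are the maps $f$ with $Lf$ an isomorphism; and likewise for $L^T$. The one nonformal input I would isolate at the start is the universal property of the coaugmentation: $\eta_X\colon X\to LX$ is an $L$-local equivalence with $L$-local target, and any $L$-local equivalence $g\colon X\to W$ whose target $W$ is $L$-local factors uniquely as $g=\bar g\circ\eta_X$ with $\bar g\colon LX\to W$ an isomorphism (explicitly $\bar g=\eta_W^{-1}\circ Lg$).

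First I would dispatch the forward direction. Suppose $\varphi\colon UL^T\xrightarrow{\cong}LU$ is a natural isomorphism. If $f$ is an $L^T$-local equivalence, then $L^T(f)$ is an isomorphism, hence so is $UL^T(f)$, and the naturality square for $\varphi$ forces $LU(f)$ to be an isomorphism; thus $U(f)$ is an $L$-local equivalence, so $U$ preserves local equivalences. For local objects, let $E$ be $L^T$-local, so $\eta^T_E$ is an isomorphism and therefore $U\eta^T_E\colon U(E)\to UL^T(E)$ is an isomorphism; composing with $\varphi_E$ yields an isomorphism $U(E)\cong LU(E)$. Since $LU(E)$ is $L$-local (being in the image of $L$) and local objects are closed under isomorphism, $U(E)$ is $L$-local. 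Hence $U$ preserves both local objects and local equivalences.

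The substance is the converse. Assume $U$ preserves local objects and local equivalences. For each algebra $X$, the coaugmentation $\eta^T_X\colon X\to L^T(X)$ is an $L^T$-local equivalence with $L^T$-local target, so $U\eta^T_X\colon U(X)\to UL^T(X)$ is an $L$-local equivalence (preservation of equivalences) whose target $UL^T(X)$ is $L$-local (preservation of objects). By the universal property recalled above, there is a unique isomorphism $\alpha_X\colon LU(X)\xrightarrow{\cong}UL^T(X)$ with $\alpha_X\circ\eta_{U(X)}=U\eta^T_X$. This defines the candidate comparison $\alpha\colon LU\to UL^T$ objectwise, and by construction it is compatible with the coaugmentations.

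I expect the main obstacle to be verifying naturality of $\alpha$, and this is precisely where the uniqueness clause of the universal property does the work. Given $\phi\colon X\to Y$, I would show that the two composites $LU(X)\to UL^T(Y)$, namely $UL^T(\phi)\circ\alpha_X$ and $\alpha_Y\circ LU(\phi)$, agree by checking that they agree after precomposition with $\eta_{U(X)}$: using naturality of $\eta^T$, naturality of $\eta$, and the defining equations $\alpha_X\circ\eta_{U(X)}=U\eta^T_X$ and $\alpha_Y\circ\eta_{U(Y)}=U\eta^T_Y$, both composites collapse to $U\eta^T_Y\circ U\phi$. Since $UL^T(Y)$ is $L$-local, a morphism out of $LU(X)$ into it is determined by its restriction along $\eta_{U(X)}$, so the two composites coincide and $\alpha$ is natural. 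As each $\alpha_X$ is an isomorphism, $\alpha$ is a natural isomorphism $LU\cong UL^T$, which completes the converse. The only care required beyond this is stating the ambient facts about localizations on a homotopy category precisely, since every remaining step is formal once those are in hand.
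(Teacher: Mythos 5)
Your proof is correct. The paper does not actually prove this lemma---it is quoted from Section~7 of \cite{carato}---and your argument via coaugmented idempotent functors (defining $\alpha_X$ by the universal property of $\eta_{U(X)}$ applied to the $L$-local equivalence $U\eta^T_X$ into the $L$-local object $UL^T(X)$, then getting naturality from the uniqueness clause) is precisely the standard argument given there.
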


The following is a uniqueness theorem for the lift, proving that if the lift exists then it must be the lift we expect to exist, namely $L_{T(\cat{C})}$. Note, however, that $L_{T(\cat{C})}$ could exist and not be a lift of $L_\cat{C}$. An example is provided in Example \ref{example:postnikov}. Conditions to force $L_{T(\cat{C})}$ to be a lift of $L_\cat{C}$ are given in \cite{gutierrez-rondigs-spitzweck-ostvaer-colocalizations}. We have weakened the hypothesis in \cite{carato} from requiring full transferred model structures to only requiring semi-model structures:

\begin{theorem} \label{thm:carato-if-lift-exists}
Suppose $\algtm$ has a transferred semi-model structure, that $\ltcalgtm$ exists as a semi-model category, and that $L_\cat{C}$ lifts to Ho$(\algtm)$. Then:
\begin{enumerate}
\item $T$ preserves $\cat{C}$-local equivalences between cofibrant objects, 
\item There is a natural isomorphism $\beta_X:L_\cat{C} U(X) \cong U\ltc(X)$ in Ho$(\M)$ for all algebras $X$, and 
\item $U$ preserves and reflects local equivalences, trivial fibrations, and fibrant objects.
\end{enumerate}
\end{theorem}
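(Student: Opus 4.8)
The plan is to reduce all three conclusions to a single identification, namely that the abstract lift $L^T$ coincides with the concrete localization $\ltc = L_{T(\cat{C})}$ furnished by the semi-model category $\ltcalgtm$, and then to read off the statements from this identification together with the two mapping-space lemmas already proved. A recurring tool will be that the transferred structure makes $U$ create, hence reflect, weak equivalences, so that the induced functor $U:\hoalgtm \to \Ho(\M)$ is conservative: any morphism of $\hoalgtm$ may be represented by a map $f$ between bifibrant objects (bifibrant replacement is available in a semi-model category, since cofibrant objects admit fibrant replacements), and $U$ sends it to an isomorphism exactly when $Uf$ is a weak equivalence, which by reflection forces $f$ to be one.

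The central step is the identification $L^T \cong \ltc$, which I would establish by showing the two idempotent coaugmented endofunctors have the same class of local objects. For the inclusion of $L^T$-local objects into $T(\cat{C})$-local objects I would apply Lemma \ref{lemma:carato-beta-iff-U-preserves} to the natural isomorphism $h:L_\cat{C} U \cong U L^T$: it gives that $U$ carries $L^T$-local objects to $\cat{C}$-local objects, and the earlier lemma that $Z$ is $T(\cat{C})$-local iff $UZ$ is $\cat{C}$-local then places such $Z$ among the $T(\cat{C})$-local objects. For the reverse inclusion I would use the compatibility $h \circ lU = U l^T$ of the lift: if $Z$ is $T(\cat{C})$-local then $UZ$ is $\cat{C}$-local, so $l_{UZ}$ is invertible in $\Ho(\M)$, whence $U(l^T_Z) = h_Z \circ l_{UZ}$ is invertible, and conservativity of $U$ forces $l^T_Z$ to be invertible, i.e. $Z$ to be $L^T$-local. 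Since an idempotent coaugmented functor on a homotopy category is determined by its local objects, this yields $L^T \cong \ltc$; applying $U$ and composing with $h$ produces the natural isomorphism $\beta_X:L_\cat{C} U(X) \cong U\ltc(X)$ of conclusion (2).

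For conclusion (3) I would treat each notion in turn. Preservation and reflection of local objects is exactly the earlier lemma ($Z$ is $T(\cat{C})$-local iff $UZ$ is $\cat{C}$-local), and since $U$ creates fibrations, the fibrant objects of $\ltcalgtm$ (fibrant plus $T(\cat{C})$-local) correspond under $U$ to fibrant $\cat{C}$-local objects of $\lcm$, giving the fibrant-object claim. Trivial fibrations are unchanged by left Bousfield localization on either side, so they coincide with the trivial fibrations of $\algtm$ and of $\M$, and $U$ preserves and reflects these by the transferred structure. For local equivalences I would use the naturality square of $h$ together with $L^T \cong \ltc$ and conservativity: a map $f$ is a $T(\cat{C})$-local equivalence iff $\ltc f \cong L^T f$ is invertible iff $L_\cat{C}(Uf)$ is invertible iff $Uf$ is a $\cat{C}$-local equivalence, the reflecting direction of the middle step being where conservativity of $U$ enters. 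Finally, for conclusion (1), given a $\cat{C}$-local equivalence $g:V\to W$ between cofibrant objects, the free algebras $FV,FW$ are cofibrant, so for any $T(\cat{C})$-local fibrant algebra $Z$ the adjunction identifies $\Map_{\algtm}(F(g),Z)$ with $\Map_\M(g,U(Z))$, exactly as in the earlier lemma and in Theorem \ref{thm:batanin-big-iff}; since $U(Z)$ is $\cat{C}$-local fibrant this is a weak equivalence, so $F(g)$ is a $T(\cat{C})$-local equivalence, and preservation of local equivalences from (3) gives that $UF(g)=T(g)$ is a $\cat{C}$-local equivalence.

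I expect the main obstacle to be the reverse inclusion in the identification $L^T \cong \ltc$, since it is the one place where the precise compatibility $h \circ lU = U l^T$ of the lift is indispensable and where conservativity of the induced functor $U$ on homotopy categories must be invoked; once this identification is secured, the remaining assertions are essentially formal consequences of the two mapping-space lemmas and the transferred structure.
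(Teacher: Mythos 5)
Your argument is correct, but note that the paper never actually proves this theorem: it imports it from Section 7 of \cite{carato}, adding only the observation that the argument there survives the weakening to semi-model structures because it works on the homotopy level or on cofibrant objects, with the lift defining $\beta_X$ taking place in $\lcm$ rather than in a category of algebras. Your route is organized differently. Rather than constructing $\beta_X$ by lifting $UE \to UR_{T(\cat{C})}E$ against $UE \to R_{\cat{C}}UE$ in $\lcm$ (the construction the paper recalls in Theorem \ref{thm:white-preservation} and Proposition \ref{prop:preservation-notions-agree}), you first identify the abstract lift $L^T$ with $L_{T(\cat{C})}$ by matching local objects --- one inclusion from Lemma \ref{lemma:carato-beta-iff-U-preserves} combined with the mapping-space lemma at the start of Section \ref{sec:alg-in-loc}, the other from the compatibility $h \circ lU = Ul^T$ together with conservativity of $\Ho(U)$ --- and then read off (2) and (3) formally and (1) via the free--forgetful adjunction on mapping spaces. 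This is a clean, self-contained alternative: it makes transparent exactly where the compatibility condition of Definition \ref{defn:carato-lifting-defn} is indispensable, and it delivers the homotopy uniqueness of the lift (which the paper needs later in Section \ref{sec:preservation}) as a byproduct, whereas the lifting-square construction produces $\beta_X$ more concretely at the point-set level of $\lcm$. Two steps deserve to be made explicit. First, the claim that a coaugmented functor on a homotopy category is determined by its local objects requires $L^T$ to be a genuine homotopical localization (coaugmented and idempotent); this is implicit in Definition \ref{defn:carato-lifting-defn} and in the phrasing of Lemma \ref{lemma:carato-beta-iff-U-preserves}, but you should invoke it. Second, the conservativity of $\Ho(U)$ rests on representing a morphism of $\hoalgtm$ by a map of bifibrant objects and then using the Whitehead theorem and saturation for semi-model categories; this holds, but merits a citation to \cite{spitzweck-thesis} or \cite{barwickSemi}.
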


The reason we can weaken the hypothesis to only requiring a semi-model structure is that the proof in \cite{carato} only ever works on the homotopy level or on the subcategory of cofibrant objects. The argument required to define $\beta_X$ (recalled in the next section) requires a lift in $\lcm$, not in a category of algebras. We will see some consequences of this generalization in Section \ref{sec:preservation}, and the generalization allows us to apply this theory to the many situations where only semi-model structures are known.

\section{Preservation of algebras by localization}
\label{sec:preservation}

The following definition has appeared in \cite{white-localization}, where $\M$ is a model category and $\cat{C}$ is a set of maps in $\M$. Write $L_\cat{C}$ for the composition of derived functors $\Ho(\M)\to \Ho(L_\cat{C}(\M)) \to \Ho(\M)$, i.e. $E\to L_\cat{C}(E)$ is the unit map of the adjunction $\Ho(\M) \leftrightarrows \Ho(L_\cat{C}(\M))$.

\begin{defn} \label{defn:preservation-white}
$L_\cat{C}$ is said to \textit{preserve $T$-algebras} if 

\begin{enumerate}
\item When $E$ is a $T$-algebra there is some $T$-algebra $\tilde{E}$ which is weakly equivalent in $\M$ to $L_\cat{C}(E)$.

\item In addition, when $E$ is a cofibrant $T$-algebra, then there is a choice of $\tilde{E}$ in $\algtm$ with $U(\tilde{E})$ local in $\M$, there is a $T$-algebra homomorphism $r_E:E\to \tilde{E}$ that lifts the localization map $l_E:E\to L_\cat{C}(E)$ up to homotopy, and there is a weak equivalence $\beta_E:L_\cat{C}(UE)\to U\tilde{E}$ such that $\beta_E \circ l_{UE} \cong Ur_E$ in $\ho(\M)$.
\end{enumerate}
\end{defn}

Observe that, when $L_\cat{C}$ lifts to the homotopy category of $T$-algebras in the sense of Definition \ref{defn:carato-lifting-defn}, it implies the preservation of Definition \ref{defn:preservation-white} on the homotopy category level, but does not necessarily imply there is an actual \textit{map} from $E$ to $\tilde{E}$ in $\algtm$. However, Definition \ref{defn:preservation-white} does imply that $L_\cat{C}$ lifts to the homotopy category of $T$-algebras, and naturality of $\beta$ is deduced as part of Theorem \ref{thm:omnibus}.

The following is proven in \cite{white-localization}, and provides a host of examples where preservation occurs, especially for algebras over $\Sigma$-cofibrant colored operads, over entrywise cofibrant colored operads \cite{white-yau}, and for commutative monoids.

\begin{theorem} \label{thm:white-preservation}
If $\algtm$ and $\algtlcm$ have transferred semi-model structures then $L_\cat{C}$ preserves $T$-algebras.
\end{theorem}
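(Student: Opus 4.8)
The plan is to exploit Theorem \ref{thm:existence-implies-locAlg=algLoc}, which under the present hypotheses identifies the transferred local structure $\algtlcm$ with the lifted localization $\ltcalgtm$. In particular, fibrant replacement in the semi-model category $\algtlcm$ realizes the localization $\ltc$ of $T$-algebras, and this is the mechanism by which I will produce $\tilde{E}$. Throughout I will use the defining feature of a transferred structure (Definition \ref{defn:transferred}): since $\algtlcm$ is transferred from $\lcm$, the forgetful functor $U$ preserves and reflects both the weak equivalences (i.e. it sends $T(\cat{C})$-local equivalences to $\cat{C}$-local equivalences) and the fibrations. This is exactly the input needed to pass homotopical information through $U$, and it is built into the assumption that $\algtlcm$ exists.

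For item (1), given a $T$-algebra $E$, I would first take a cofibrant replacement $q\colon QE \to E$ in the projective structure $\algtm$, so that $U(q)$ is a weak equivalence in $\M$. Since the cofibrations of $\algtm$ and $\algtlcm$ coincide (established in the proof of Theorem \ref{thm:existence-implies-locAlg=algLoc}), $QE$ is cofibrant in $\algtlcm$, so the semi-model category factorization applies to the map out of the cofibrant object $QE$, producing a $T(\cat{C})$-local trivial cofibration $r\colon QE \to \tilde{E}$ with $\tilde{E}$ fibrant in $\algtlcm$; equivalently, $U\tilde{E}$ is $\cat{C}$-local. Applying $U$, the zig-zag $UE \xleftarrow{U(q)} UQE \xrightarrow{U(r)} U\tilde{E}$ consists of $\cat{C}$-local equivalences whose target $U\tilde{E}$ is $\cat{C}$-local. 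In $\ho(\lcm)$ this exhibits an isomorphism between $U\tilde{E}$ and $L_\cat{C}(UE)$; since both are $\cat{C}$-local and a $\cat{C}$-local equivalence between $\cat{C}$-local objects is an $\M$-weak equivalence, this identifies $U\tilde{E}$ with $L_\cat{C}(UE)$ up to weak equivalence in $\M$, giving the required $\tilde{E}$.

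For item (2) I take $E$ cofibrant in $\algtm$, so no replacement is needed and I factor the map out of $E$ in $\algtlcm$ as a $T(\cat{C})$-local trivial cofibration $r_E\colon E \to \tilde{E}$ followed by a fibration. Then $r_E$ is a genuine $T$-algebra homomorphism, $\tilde{E}$ is fibrant in $\algtlcm$ so $U\tilde{E}$ is $\cat{C}$-local, and $U(r_E)$ is a $\cat{C}$-local equivalence into a $\cat{C}$-local object, hence a model for the localization map $l_{UE}\colon UE \to L_\cat{C}(UE)$. Comparing the two localization maps $U(r_E)$ and $l_{UE}$ out of $UE$ yields a unique weak equivalence $\beta_E\colon L_\cat{C}(UE) \to U\tilde{E}$ in $\ho(\M)$ with $\beta_E \circ l_{UE} \cong U r_E$, by the universal property of localization on the homotopy category. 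This verifies that $r_E$ lifts $l_E$ up to homotopy and supplies the required $\beta_E$.

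The conceptual identifications above are the routine part; the only real care is semi-model-categorical bookkeeping. Because a semi-model category only guarantees functorial factorization for maps whose domain is cofibrant, one cannot localize an arbitrary $T$-algebra directly and must cofibrantly replace first, and to transport cofibrancy between the projective and local structures one must invoke that their cofibrations agree. The crux—that $U$ carries $T(\cat{C})$-local equivalences to $\cat{C}$-local equivalences (item (2) of Theorem \ref{thm:main})—is immediate here from the transferred structure, so it need not be assumed separately. I expect the subtlest point to be the homotopy-commutativity of the triangle defining $\beta_E$, which requires being careful that all comparisons take place among $\cat{C}$-local objects in $\ho(\lcm)$ rather than in a category of algebras.
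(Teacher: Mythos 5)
Your proposal is correct and follows essentially the same route as the paper: define $\tilde{E}$ as the fibrant replacement in $\algtlcm$ of a cofibrant replacement of $E$, use the transferred structure to see that $U\tilde{E}$ is $\cat{C}$-local and that $U$ sends the relevant maps to $\cat{C}$-local equivalences, conclude via the fact that a local equivalence between local objects is an $\M$-weak equivalence, and produce $\beta_E$ by a lifting/universal-property argument in $\lcm$. The only cosmetic difference is that the paper makes the comparison $L_\cat{C}(E)\simeq R_\cat{C}QE \to R_\cat{C}Q_TE \to R_{\cat{C},T}Q_TE$ explicit, whereas you invoke the transferred weak equivalences directly; both are valid.
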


\begin{proof}
This has already been proven in \cite{white-localization} and \cite{white-yau}, but for the sake of being self-contained we recall the main points of the proof. We must verify the statements in Definition \ref{defn:preservation-white}. Let $E$ be a $T$-algebra, and define $\tilde{E}$ to be $R_{\cat{C},T} Q_T E$ where $R_{\cat{C},T}$ (resp. $Q_T$) is fibrant replacement (resp. cofibrant replacement) in $\algtlcm$. This $\tilde{E}$ is a $T$-algebra because we have taken replacements in the category of $T$-algebras. That $\tilde{E}$ is weakly equivalent to $L_\cat{C}(E)$ is proven in \cite{white-localization} and \cite{white-yau} by constructing local weak equivalences $L_\cat{C}(E) \simeq R_\cat{C} QE \to R_\cat{C} Q_T E \to R_{\cat{C},T} Q_T E$ and then observing that a local weak equivalence between local objects (using that $\algtlcm$ is transferred from $\lcm$) is a weak equivalence. Observe that $U(\tilde{E})$ is local because the semi-model structure on $\algtlcm$ is transferred. When $E$ is already cofibrant, the map $r_E:E\to \tilde{E}$ is just the fibrant replacement map $R_{\cat{C},T}$, and the comparison $\beta_E$ is the following lift in $\lcm$:

\begin{align*}
\xymatrix{UE \ar@{^(->}[d]_{\simeq_\cat{C}} \ar[r] & U\tilde{E} \ar@{->>}[d] \\
L_\cat{C}(UE) \ar[r] \ar@{..>}[ur]_{\beta_E}& \ast}
\end{align*}
\end{proof}

We will see in Theorem \ref{thm:omnibus} that, under mild hypotheses on $\M$ and $T$, if $L_\cat{C}$ preserves $T$-algebras, then the transferred semi-model structure $\algtlcm$ exists. Furthermore, this implies the localization $L_\cat{C}$ lifts, as we now show.

\begin{proposition} \label{prop:preservation-notions-agree}
Suppose $\algtm$ and $\algtlcm$ have transferred semi-model structures. Then $L_\cat{C}$ lifts to the homotopy category of $T$-algebras in the sense of Definition \ref{defn:carato-lifting-defn}.
\end{proposition}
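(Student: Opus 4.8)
The plan is to build the lift directly out of the transferred semi-model structure on $\algtlcm$ and to reuse the comparison data already produced in Theorem \ref{thm:white-preservation}. Since the present hypotheses are exactly those of that theorem, $\lc$ preserves $T$-algebras in the sense of Definition \ref{defn:preservation-white}; in particular, for every algebra $E$ we obtain the local fibrant replacement $\tilde{E} = \rct Q_T E$ together with a weak equivalence $\beta_E : \lc(UE) \to U\tilde{E}$. I would define the endofunctor $L^T$ on $\hoalgtm$ to be the derived round-trip of the left Quillen identity $\algtm \to \algtlcm$: on objects $L^T(E) = \rct Q_T E$, and the localization map $l^T_E : E \to L^T(E)$ is the composite (in $\ho(\algtm)$) of the inverted cofibrant replacement $Q_T E \to E$ with the local fibrant replacement map. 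Taking the cofibrant replacement first is what makes the semi-model factorization $Q_T E \to \rct Q_T E \to \ast$ available. That the identity $\algtm \to \algtlcm$ is left Quillen, so that $L^T$ is a well-defined endofunctor on homotopy categories, follows from Theorem \ref{thm:existence-implies-locAlg=algLoc}, which shows the two transferred semi-model structures share their cofibrations and that fibrations in $\algtlcm$ are fibrations in $\algtm$.

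Next I would produce the natural isomorphism $h : \lc U \to U L^T$ in $\ho(\M)$. Because the semi-model structure on $\algtlcm$ is transferred, $U L^T(E) = U(\rct Q_T E)$ is $\cat{C}$-local in $\M$, and the right-derived functor of $U$ is computed on this already-fibrant object. The cleanest way to pin down the component $h_E$ canonically, and thereby obtain both its naturality and the required compatibility, is the universal property of $\lc$ on the homotopy category: since $U L^T(E)$ is local, precomposition with $l_{UE} : UE \to \lc(UE)$ gives a bijection $\hom_{\ho(\M)}(\lc(UE), U L^T(E)) \cong \hom_{\ho(\M)}(UE, U L^T(E))$, and I define $h_E$ to be the unique map corresponding to $U l^T_E$. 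By construction $h_E \circ l_{UE} = U l^T_E$, which is precisely the compatibility $h \circ lU = U l^T$ demanded by Definition \ref{defn:carato-lifting-defn}. A diagram chase identifies this canonical $h_E$ with $\beta_E$, a weak equivalence between $\cat{C}$-local objects and hence an isomorphism in $\ho(\M)$, so $h$ is a natural isomorphism provided naturality is established.

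The main obstacle is exactly naturality of $h$, since Theorem \ref{thm:white-preservation} produces $\beta_E$ only objectwise, via a lift that is a priori choice-dependent. The universal-property description resolves this. Given a $T$-algebra map $f : E \to E'$, the two composites $U L^T(f) \circ h_E$ and $h_{E'} \circ \lc U(f)$ are both maps $\lc(UE) \to U L^T(E')$; after precomposition with the initial map $l_{UE}$ they become, respectively, $U L^T(f) \circ U l^T_E$ and $h_{E'} \circ l_{UE'} \circ Uf = U l^T_{E'} \circ Uf$, where the last equality uses naturality of the unit $l$ and the defining equation for $h_{E'}$. Naturality of $l^T$ gives $U L^T(f) \circ U l^T_E = U l^T_{E'} \circ Uf$, so the two composites agree after precomposition with $l_{UE}$, and the bijection forces them to coincide. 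This yields the naturality square and completes the verification of all three pieces of Definition \ref{defn:carato-lifting-defn}.
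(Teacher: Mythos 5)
Your proposal is correct and follows essentially the same route as the paper: define $L^T$ via local fibrant replacement of a cofibrant replacement, obtain $h_E$ (the paper's $\beta_E$) from the universal property of $\lc(UE)$ against the $\cat{C}$-local object $UL^T(E)$, and derive both invertibility and naturality from the resulting homotopy uniqueness. The only cosmetic difference is that the paper realizes $h_E$ as an explicit lift in $\lcm$ against the trivial cofibration $UE \to R_{\cat{C}}UE$, whereas you package the same universal property as a bijection of hom-sets in $\ho(\M)$; you are in fact slightly more careful than the paper in inserting $Q_T$ before the fibrant replacement.
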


\begin{proof}
Define the localization $L^T$ to be (the image in $\hoalgtm$ of) $R_{\cat{C},T}$. The natural isomorphism $h$ will be the image in $\ho(\M)$ of $\beta$. We construct $\beta$ via lifting in $\lcm$, using Theorem \ref{thm:existence-implies-locAlg=algLoc} to realize that $\algtlcm = \ltcalgtm$ and so $R_{\cat{C},T} = R_{T(\cat{C})}$:

\begin{align*}
\xymatrix{UE \ar@{^(->}[d]_{\simeq_\cat{C}} \ar[r] & UR_{T(\cat{C})}E \ar@{->>}[d] \\
R_\cat{C} UE \ar[r] \ar@{..>}[ur]_{\beta_E}& \ast}
\end{align*}

This lift demonstrates immediately that $\beta \circ lU = Ul^T$ in $\ho(\M)$. Note that the top horizontal map is a $\cat{C}$-local equivalence because $\algtlcm$ has the transferred model structure, so the lift is a $\cat{C}$-local equivalence by the two out of three property. Note that the domain and codomain of the lift are $\cat{C}$-local objects (again using that the fibrations are transferred from $\lcm$). Hence, the lift is a weak equivalence in $\M$. In addition, the lift is unique in $\ho(\M)$ by the universal property of localization, since any other lift would necessarily be a weak equivalence in $\M$ between the same two $\cat{C}$-local objects. Finally, the lift is natural in $\ho(\M)$ because if we began with a map $E\to F$ and constructed this lift on its domain and codomain then we could in addition construct a homotopy unique lift from $R_\cat{C} UE$ to $UR_{T(\cat{C})}F$, and so uniqueness tells us the relevant naturality square commutes in $\ho(\M)$.
\end{proof}

\begin{proposition} \label{prop:strong-pres-implies-U-preserves}
If $L_\cat{C}$ preserves $T$-algebras then $U$ preserves local equivalences.
\end{proposition}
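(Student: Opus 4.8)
The plan is to prove the contrapositive-free statement directly: given a $T(\cat{C})$-local equivalence $f\colon X\to Y$ in $\algtm$, show that $Uf$ is a $\cat{C}$-local equivalence, i.e. that the derived map $L_\cat{C}(Uf)\colon L_\cat{C}(UX)\to L_\cat{C}(UY)$ is a weak equivalence in $\M$. The mechanism is to compare the localization of $Uf$ taking place in $\M$ with the ``localization of $f$'' taking place inside $\algtm$ that the hypothesis provides. First I would reduce to the case where $X$ and $Y$ are cofibrant algebras: replacing $f$ by a cofibrant replacement $Qf$, the maps $QX\to X$ and $QY\to Y$ are weak equivalences in $\algtm$, hence both $T(\cat{C})$-local equivalences and (applying the weakness-preserving functor $U$) maps whose images are $\cat{C}$-local equivalences, so two-out-of-three for each class of local equivalences lets me pass between $f$ and $Qf$. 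For such cofibrant $X,Y$, Definition \ref{defn:preservation-white}(2) furnishes local fibrant algebras $\tilde{X},\tilde{Y}$ (local by the Lemma, and fibrant in $\algtm$ since a $\cat{C}$-local object is fibrant in $\M$ and fibrations are transferred), algebra maps $r_X\colon X\to\tilde{X}$, $r_Y\colon Y\to\tilde{Y}$, and weak equivalences $\beta_X,\beta_Y$ with $\beta_X\circ l_{UX}\cong Ur_X$ and $\beta_Y\circ l_{UY}\cong Ur_Y$ in $\ho(\M)$; in particular $Ur_X$ and $Ur_Y$ are $\cat{C}$-local equivalences, being composites of the localization units with the $\beta$'s.

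Next I would extract a one-sided inverse from the local-equivalence property of $f$ itself. Because $\tilde{X}$ is a local fibrant algebra and $f$ is a $T(\cat{C})$-local equivalence, the map $\Map_{\algtm}(f,\tilde{X})\colon \Map_{\algtm}(Y,\tilde{X})\to\Map_{\algtm}(X,\tilde{X})$ is a weak equivalence, so the class of $r_X$ lifts to a map $s\colon Y\to\tilde{X}$ with $s\circ f\simeq r_X$ in $\algtm$. Applying $U$ gives $Us\circ Uf\cong Ur_X$ in $\ho(\M)$, and since $Ur_X$ is a $\cat{C}$-local equivalence into the local object $U\tilde{X}$, the universal property of $L_\cat{C}$ produces a map $\sigma\colon L_\cat{C}(UY)\to L_\cat{C}(UX)$ with $\sigma\circ L_\cat{C}(Uf)=\mathrm{id}$ in $\ho(\M)$ (checked by precomposing with the local equivalence $l_{UX}$ and using naturality of the localization unit). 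Thus $L_\cat{C}(Uf)$ is already split in $\ho(\M)$.

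Finally I would produce the comparison map $\tilde{f}\colon\tilde{X}\to\tilde{Y}$ witnessing that $\tilde{X}$ and $\tilde{Y}$ present the localization of $f$ inside $\algtm$, and deduce the missing direction. Once $\tilde{f}$ is available with $\tilde{f}\circ r_X\simeq r_Y\circ f$, a two-out-of-three argument for $T(\cat{C})$-local equivalences (using that $r_X,r_Y,f$ are $T(\cat{C})$-local equivalences) shows $\tilde{f}$ is a $T(\cat{C})$-local equivalence between local algebras, hence a weak equivalence in $\algtm$ by the standard fact that a local equivalence between local objects is a weak equivalence; therefore $U\tilde{f}$ is a weak equivalence in $\M$. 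Transporting along the $\beta$'s then identifies $L_\cat{C}(Uf)$ with $\beta_Y^{-1}\circ U\tilde{f}\circ\beta_X$ in $\ho(\M)$, so $L_\cat{C}(Uf)$ is a weak equivalence and $Uf$ is a $\cat{C}$-local equivalence, as desired.

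The hard part will be the last step, specifically the verification that the localization maps $r_X,r_Y$ are genuine $T(\cat{C})$-local equivalences (equivalently, the construction of $\tilde{f}$ by lifting $r_Y\circ f$ through $r_X$). This is delicate because one cannot deduce it formally from the homotopy-level data alone: $Ur_X$ being a $\cat{C}$-local equivalence does \emph{not} force $r_X$ to be a $T(\cat{C})$-local equivalence, since $U$ need not reflect local equivalences in general (that reflection is precisely the partial converse studied later, needing item (5)). The resolution is to use the concrete content of the preservation hypothesis rather than only its shadow in $\ho(\M)$: in the situations producing preservation (cf. Theorem \ref{thm:white-preservation}), $\tilde{E}$ arises as $\rct Q_T E$ and $r_E$ is the local fibrant replacement map, which is a $T(\cat{C})$-local trivial cofibration and hence a $T(\cat{C})$-local equivalence. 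Isolating exactly this property of the preservation data is where the hypothesis ``$L_\cat{C}$ preserves $T$-algebras'' does its essential work, and it is the step I would scrutinize most carefully.
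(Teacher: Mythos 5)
Your first paragraph reproduces the first half of the paper's proof exactly: $Ur_X\cong\beta_X\circ l_{UX}$ is a composite of a $\cat{C}$-local equivalence with a weak equivalence, hence a $\cat{C}$-local equivalence into the local object $U\tilde{X}$. Your second paragraph (the splitting $\sigma\circ L_\cat{C}(Uf)=\mathrm{id}$, obtained by lifting $r_X$ through the $T(\cat{C})$-local equivalence $f$ into the local fibrant algebra $\tilde{X}$) is also correct. The genuine gap is in the third step, and the ``resolution'' you offer does not close it. To build $\tilde{f}$ with $\tilde{f}\circ r_X\simeq r_Y\circ f$, or to run two-out-of-three on that triangle, you need $r_X$ and $r_Y$ to be $T(\cat{C})$-local equivalences \emph{in} $\algtm$, and, as you yourself observe, Definition \ref{defn:preservation-white} only controls their images in $\ho(\M)$. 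Your proposed fix is to import the concrete model $\tilde{E}=\rct Q_T E$ from Theorem \ref{thm:white-preservation}; but that construction presupposes the transferred semi-model structure on $\algtlcm$, which is item (3) of Theorem \ref{thm:omnibus}. Proposition \ref{prop:strong-pres-implies-U-preserves} is used there precisely as the implication $(4)\Rightarrow(2)$, to be combined with $(2)\Leftrightarrow(3)$; if you assume the $\rct$-construction you have assumed (3), and you are merely reproving $(3)\Rightarrow(2)$, which is already Corollary \ref{cor:batanin-big-iff-semi}. So the argument is circular exactly where the content lies.

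The paper's proof of the second half takes a different route that never promotes $r_X$ to a $T(\cat{C})$-local equivalence and never realizes $\tilde{f}$ as an algebra map. It forms the diagram with top edge $U(f)$, middle edge $L_\cat{C}U(X)\to L_\cat{C}U(Y)$, and a dotted bottom edge $U\tilde{X}\dashrightarrow U\tilde{Y}$, notes that the outer verticals $U(r)$ are $\cat{C}$-local equivalences (the first half), and argues that the maps of the lower trapezoid are $\cat{C}$-local equivalences between local objects, hence weak equivalences, so the dotted comparison exists and is an isomorphism in $\ho(\M)$; since $\beta_X$ and $\beta_Y$ identify that comparison with $L_\cat{C}(Uf)$, the latter is an isomorphism and $Uf$ is a $\cat{C}$-local equivalence. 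The structural difference from your attempt is that every lift and every cancellation is performed in $\ho(\M)$, where the only inputs are the weak equivalences $\beta$, the localization units $l$, and the fact that $U\tilde{X}$ and $U\tilde{Y}$ are local; the algebra-level lifting you were trying to perform is both unavailable from the hypotheses and not what the paper uses. If you rework the ending, the second one-sided inverse must likewise be manufactured in $\ho(\M)$ from the data of Definition \ref{defn:preservation-white}, not by upgrading $r_X$ inside $\algtm$.
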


\begin{proof}
Let $E$ be a $T$-algebra, cofibrant in $\M$. Consider the following diagram, guaranteed to exist because $L_\cat{C}$ preserves $T$-algebras:

\begin{align*}
\xymatrix{U(E) \ar[r]^{Ur_E} \ar[d]_{l_E} & U(\tilde{E})\\
L_\cat{C}(U(E)) \ar[ur]_{\beta_E} & \\}
\end{align*}

Since $l_E:U(E) \to L_\cat{C}(U(E))$ is a local equivalence and $\beta_E$ is a (local) equivalence, $Ur_E$ is a local equivalence. So $U$ preserves local equivalences of the form $r_E$.

Suppose $f:E\to F$ is a $T(\cat{C})$-local equivalence in $\algtm$. Consider the diagram

\begin{align*}
\xymatrix{
U(E) \ar[rrr]^{U(f)} \ar[dr] \ar[dd]_{U(r)} & & & U(F) \ar[dl] \ar[dd]^{U(r)}\\
& L_\cat{C} U(E) \ar[r]^{L_\cat{C}(U(f))} \ar[dl] & L_\cat{C} U(F) \ar[dr] & \\
\tilde{E} \ar@{..>}[rrr] &&& \tilde{F}}
\end{align*}

We proved above that the outside vertical maps are local equivalences. The maps in the lower trapezoid are all weak equivalences in $\M$ because they are local equivalences between local objects. Thus, in $\ho(\M)$, the dotted arrow exists and is an isomorphism. It follows that $U(f)$ is a local equivalence because its localization is an isomorphism in $\ho(\M)$.

\end{proof}

We turn now to the consequences we can deduce when preservation occurs. Recall that semi-admissibility is a weak hypothesis often satisfied in practice by monads encoding colored operads.

\begin{theorem}
Assume $T$ is $\cat{K}$-semi-admissible, $\M$ is $\cat{K}$-compactly generated, and $\lcm$ is monoidal, $\cat{K}$-compactly generated, and has cofibrant domains of the generating trivial cofibrations, for some saturated class of morphisms $\cat{K}$. If $L_\cat{C}$ lifts to the homotopy category of $T$-algebras (as in Definition \ref{defn:carato-lifting-defn}) then Alg$_T (L_\cat{C}(\M))$ has a transferred semi-model structure, which is a full model structure if $T$ is $\cat{K}$-admissible and if Alg$_T(\M)$ is left proper. In either case, the lifted homotopy localization $L^T$ is induced by a left Bousfield localization $\ltcalgtm$.
\end{theorem}

\begin{proof}
Lemma \ref{lemma:carato-beta-iff-U-preserves} proves that $U:\hoalgtm \to \ho(\M)$ preserves local equivalences. Note that $U:\algtm \to \M$ also preserves local equivalences.
Theorem \ref{thm:batanin-big-iff} (resp. Corollary \ref{cor:batanin-big-iff-semi}) implies that $\algtlcm$ exists. Theorem \ref{thm:existence-implies-locAlg=algLoc} implies $\ltcalgtm$ exists and coincides with $\algtlcm$. The homotopy uniqueness of $\beta$ in Theorem \ref{thm:carato-if-lift-exists} implies $L_{T(\cat{C})}$ is a lift of $L_\cat{C}$ to the model category level, i.e. agrees with $L^T$.
\end{proof}

Note that the converse to this theorem also holds, i.e. we can deduce that $L_\cat{C}$ lifts to the homotopy category of $T$-algebras if we know $\algtlcm$ exists as a semi-model category, using Proposition \ref{prop:preservation-notions-agree}.

We are finally ready for our omnibus theorem relating the notions considered in this paper and \cite{carato}.

\begin{theorem} \label{thm:omnibus}
Let $\cat{K}$ be a saturated class of morphisms. Assume $T$ is $\cat{K}$-semi-admissible, $\M$ is $\cat{K}$-compactly generated, and $\lcm$ is $\cat{K}$-compactly generated with cofibrant domains of the generating trivial cofibrations. The following are equivalent:
\begin{enumerate}
\item $L_\cat{C}$ lifts to a left Bousfield localization $\ltc$ of semi-model categories on $\algtm$.
\item $U:\algtm \to \M$ preserves local equivalences.
\item $\algtlcm$ has a transferred semi-model structure.
\item $L_\cat{C}$ preserves $T$-algebras.
\end{enumerate}
Furthermore, any of the above implies
\begin{enumerate}
\item[(5)] $T$ preserves $\cat{C}$-local equivalences between cofibrant objects.
\end{enumerate}
\end{theorem}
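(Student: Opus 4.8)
The plan is to prove the equivalence of (1)--(4) by running a cycle of implications, each link of which is provided by a result already established earlier in the paper, and then to extract (5) from the machinery of \cite{carato} once any of the four conditions is in force. Before starting the cycle I would record the standing consequence of the hypotheses: since $T$ is $\cat{K}$-semi-admissible and $\M$ is $\cat{K}$-compactly generated, Theorem \ref{thm:admissible-implies-(semi)-models} guarantees that $\algtm$ always carries a transferred semi-model structure, a fact used tacitly at every step.

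The core of the argument is the equivalence (2) $\Leftrightarrow$ (3), which is exactly Corollary \ref{cor:batanin-big-iff-semi}: under the present assumptions on $\M$, $T$, and $\lcm$, the transferred semi-model structure $\algtlcm$ exists if and only if $U$ preserves local weak equivalences. I would then bring in (4) on both sides. For (3) $\Rightarrow$ (4) I would cite Theorem \ref{thm:white-preservation}, which says that once both $\algtm$ and $\algtlcm$ carry transferred semi-model structures, $L_\cat{C}$ preserves $T$-algebras; for the return trip (4) $\Rightarrow$ (2) I would invoke Proposition \ref{prop:strong-pres-implies-U-preserves}, which shows directly that preservation of $T$-algebras forces $U$ to preserve local equivalences. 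Together with (2) $\Leftrightarrow$ (3), this establishes the mutual equivalence of (2), (3), and (4).

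To fold in (1) I would argue (3) $\Rightarrow$ (1) and (1) $\Rightarrow$ (2). For (3) $\Rightarrow$ (1), Theorem \ref{thm:existence-implies-locAlg=algLoc} shows that once $\algtlcm$ exists the localization $\ltcalgtm$ exists and coincides with it, and Proposition \ref{prop:preservation-notions-agree} supplies the natural isomorphism exhibiting this localization as a genuine lift of $L_\cat{C}$; together these say precisely that $L_\cat{C}$ lifts to a left Bousfield localization $\ltc$ of semi-model categories on $\algtm$. For (1) $\Rightarrow$ (2), I would observe that such a localization $\ltc$ descends to a lift of $L_\cat{C}$ on $\ho(\algtm)$ in the sense of Definition \ref{defn:carato-lifting-defn}, so the three hypotheses of Theorem \ref{thm:carato-if-lift-exists} hold ($\algtm$ has a transferred semi-model structure, $\ltcalgtm$ exists, and $L_\cat{C}$ lifts); part (3) of that theorem then returns that $U$ preserves local equivalences. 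This closes the cycle, proving all four conditions equivalent. Finally, for the implication to (5): once any of (1)--(4) holds all of them hold, so the hypotheses of Theorem \ref{thm:carato-if-lift-exists} are again satisfied, and part (1) of that theorem asserts exactly that $T$ preserves $\cat{C}$-local equivalences between cofibrant objects.

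I expect the main obstacle to lie in the careful bookkeeping around the word ``lift.'' The delicate point is ensuring that the bare existence of a transferred structure $\algtlcm$ (condition (3)) really upgrades to a left Bousfield localization that lifts $L_\cat{C}$ in the precise homotopy-theoretic sense of Definition \ref{defn:carato-lifting-defn} (condition (1)), and conversely that condition (1) supplies exactly the homotopy-level lifting data consumed by Theorem \ref{thm:carato-if-lift-exists}. The identification $\algtlcm = \ltcalgtm$ from Theorem \ref{thm:existence-implies-locAlg=algLoc} is what lets one pass freely between the semi-model-categorical statements and their homotopy-category shadows, and it is in matching these two levels that the argument requires the most care.
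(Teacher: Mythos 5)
Your proposal is correct and follows essentially the same cycle of implications as the paper's proof: $(2)\Leftrightarrow(3)$ via Corollary \ref{cor:batanin-big-iff-semi}, $(3)\Rightarrow(4)$ via Theorem \ref{thm:white-preservation}, $(4)\Rightarrow(2)$ via Proposition \ref{prop:strong-pres-implies-U-preserves}, $(3)\Rightarrow(1)$ via Theorem \ref{thm:existence-implies-locAlg=algLoc}, and $(1)\Rightarrow(5)$ via Theorem \ref{thm:carato-if-lift-exists}. The only cosmetic difference is that for $(1)\Rightarrow(2)$ the paper cites Lemma \ref{lemma:carato-beta-iff-U-preserves} directly rather than part (3) of Theorem \ref{thm:carato-if-lift-exists}, which is an immaterial choice between two results drawn from the same source.
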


In Theorem \ref{thm:implies-reflects-local}, we provide a partial converse, i.e. we determine what can be said when only (5) is known.

\begin{proof}
That (1) implies (2) is part of Lemma \ref{lemma:carato-beta-iff-U-preserves}. That (2) is equivalent to (3) is Corollary \ref{cor:batanin-big-iff-semi}, and that (3) implies (1) is Theorem \ref{thm:existence-implies-locAlg=algLoc}. That (3) implies (4) is Theorem \ref{thm:white-preservation}. That (4) implies (2) is Proposition \ref{prop:strong-pres-implies-U-preserves}. 
That (1) implies (5) is part of Theorem \ref{thm:carato-if-lift-exists}. 
\end{proof}

\begin{remark} \label{remark:semi-loc}
The second author has shown that, for locally presentable cofibrantly generated semi-model categories, $\ltcalgtm$ exists as a semi-model category whenever $\algtm$ does \cite{white-bous-loc-semi}. However, this does not mean that preservation comes for free, because one does not know that the resulting localization lifts $L_\cat{C}$ unless one also knows that $U$ preserves local equivalences, as the next example shows.
\end{remark}

\begin{example} \label{example:postnikov}
Let $\M$ be the category of symmetric spectra, with the stable model structure \cite{hovey-shipley-smith}. Recall that the $n^{th}$ Postnikov section functor $P_n$ is the Bousfield localization $L_f$ corresponding to the map $\Sigma^\infty (\Sigma f)$ where $f:S^n \to *$. Recall from \cite{cgmv} that $P_{-1}$ does not preserve monoids, because $S\wedge P_{-1}R \to P_{-1}R \wedge P_{-1}R \to P_{-1}R$ is not a homotopy equivalence as it would have to be for $P_{-1}R$ to be a homotopy ring. Thus, none of the 5 conclusions of Theorem \ref{thm:omnibus} can be satisfied. However, there is a semi-model categorical left Bousfield localization $L_{Ass(P_{-1})}$ on the category of ring spectra, by \cite{white-bous-loc-semi}. This localization cannot be a lift of $P_{-1}$, because it would violate Theorem \ref{thm:omnibus}.
\end{example}

\begin{example} \label{example:comm-mon-sym}
In \cite{white-commutative-monoids}, the second author introduced the \textit{commutative monoid axiom} and proved that it implies the existence of a transferred model structure on commutative monoids. Let $\Sym$ denote the free commutative monoid functor. In \cite{white-localization}, it was proven that if a monoidal left Bousfield localization $L_\cat{C}$ has the property that $\Sym(\cat{C})$ is contained in the $\cat{C}$-local equivalences, then $L_\cat{C}(\M)$ inherits the commutative monoid axiom from $\M$, and hence $L_\cat{C}$ preserve commutative monoids.  Theorem \ref{thm:omnibus} implies the converse, i.e. if $L_\cat{C}$ preserves commutative monoids then $\Sym$ must preserve $\cat{C}$-local equivalences. This answers a question posed to the second author by Nito Kitchloo.
\end{example}

\begin{remark}
The main result of \cite{gutierrez-rondigs-spitzweck-ostvaer-colocalizations} complements Theorem \ref{thm:omnibus}. The result states that, if $\M$ is a simplicial monoidal model category, if $T$ comes from a colored operad $O$, if $\algtm$ inherits a transferred model structure from $\M$, if $L_\cat{C}$ is a monoidal left Bousfield localization and $O(c_1,\dots,c_n;c)\otimes -$ preserves $\cat{C}$-local equivalences, and if the model category $\ltcalgtm$ exists, then $U$ preserves local equivalences (and hence, all 5 of the statements in Theorem \ref{thm:omnibus} are true). The authors wonder if the results of this paper can be made to work with only semi-model structures. If so, the existence of $\ltcalgtm$ appears to occur more frequently than the existence of $\algtlcm$, so this result might give easier to check conditions such that the 5 equivalent statements in Theorem \ref{thm:omnibus} hold. Note that \cite{gutierrez-rondigs-spitzweck-ostvaer-colocalizations} also considers colocalizations.
\end{remark}

We conclude the paper with a partial converse to the last implication in Theorem \ref{thm:omnibus}, i.e. we state what can be deduced from knowing that a monad $T:\M \to \M$ preserves $\cat{C}$-local equivalences. Recall that, for any monad $(T,\mu,\epsilon)$ on $\M$ and any $X\in \M$, there is an augmented cosimplicial object defined as follows:
\begin{equation*}
\xymatrix{T^*(X) := X \ar[r]  & T(X) \ar@<0.5ex>[r]^{T \epsilon} \ar@<-0.5ex>[r]_{\epsilon_T} & \ar[l] T^2(X) \ar@<1ex>[r]\ar[r]\ar@<-1ex>[r] & T^3(X)  \ar@<0.5ex>[l] \ar@<-0.5ex>[l] \ar@<1ex>[r]\ar@<0.35ex>[r]\ar@<-0.35ex>[r]\ar@<-1ex>[r] & \dots \ar@<1ex>[l]\ar[l]\ar@<-1ex>[l]}
\end{equation*}
The maps going left are multiplication $\mu$ on $T$. The maps going right are induced by the unit $\epsilon$. For our partial converse to Theorem \ref{thm:omnibus}, we will need the following definition, which goes back to \cite{BatA}.

\begin{definition}
We will say that the monad $(T,\mu,\epsilon)$ on $\M$ is \textit{pointwise Reedy cofibrant} if, for any cofibrant $X\in \M$, the augmented cosimplicial object $T^*(X)$ is Reedy cofibrant.
\end{definition}

Many monads we come across in practice are pointwise Reedy cofibrant. For example, if $O$ is any nonsymmetric operad where $I \to O(1)$ is cofibration (where $I$ is the unit of $\M$), then the free $O$-algebra functor is pointwise Reedy cofibrant \cite{BatA}. If $T$ is a polynomial monad and the unit of $\M$ is cofibrant then $T$ is pointwise Reedy cofibrant \cite{batanin-berger}. Work in progress of Mark Johnson and Donald Yau provides even more examples. With this definition in hand, we are ready for our partial converse to Theorem \ref{thm:omnibus}. We do not expect a full converse to be true in general.

\begin{theorem}\label{thm:implies-reflects-local}  
Suppose $\algtm$ admits a transferred semi-model structure and that:
\begin{itemize}
\item $T$ is a pointwise Reedy cofibrant monad, 
\item $T$ preserves local weak equivalences between cofibrant objects, and
\item $U$ sends cofibrant algebras to cofibrant objects.
\end{itemize}
Then $U$ reflects local weak equivalences between cofibrant algebras.
\end{theorem}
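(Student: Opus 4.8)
The plan is to resolve a cofibrant $T$-algebra by free algebras via the monadic bar construction, map into a local fibrant algebra, use the free--forgetful adjunction to push the whole computation down into $\M$, and then transport the hypothesis on $U(f)$ through the resulting tower. Write $B_\bullet = B_\bullet(F,T,E)$ for the two-sided bar construction, a simplicial $T$-algebra with $B_n = F(T^n(UE))$ and augmentation $B_0 = F(UE) \to E$; here $F$ is left adjoint to $U$. Each $B_n$ is a free algebra on a cofibrant object: $UE$ is cofibrant by the third hypothesis, and since free algebras on cofibrant objects are cofibrant algebras whose underlying objects are again cofibrant (third hypothesis again), induction shows every $T^n(UE)$ is cofibrant in $\M$. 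This cofibrancy is what will let us apply the second hypothesis at every level.

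First I would establish that $B_\bullet$ genuinely resolves $E$, i.e. that the augmentation induces a weak equivalence $\operatorname{hocolim}_{\Delta^{op}} B_\bullet \xrightarrow{\sim} E$ in $\algtm$. The pointwise Reedy cofibrancy of $T$, together with cofibrancy of $UE$, is exactly what guarantees that $B_\bullet$ is Reedy cofibrant, so that its realization computes the homotopy colimit. On the underlying level, $U B_\bullet$ is the simplicial object $[n]\mapsto T^{n+1}(UE)$, which carries an extra degeneracy built from the unit $\epsilon$; hence $U B_\bullet \to UE$ is a split augmented simplicial object and its realization is weakly equivalent to $UE$ in $\M$. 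Because $T$ is finitary, $U$ preserves realizations (a sifted colimit), so $U(\operatorname{hocolim}_{\Delta^{op}} B_\bullet) \simeq UE$, and since $U$ reflects weak equivalences in the transferred semi-model structure, the augmentation is a weak equivalence of algebras. I expect this convergence statement to be the main obstacle: assembling Reedy cofibrancy of the bar construction from pointwise Reedy cofibrancy of $T$, the preservation of realizations by $U$, the extra-degeneracy argument, and the reflection of weak equivalences by $U$, all \emph{within a semi-model category}, where realizations of Reedy cofibrant simplicial objects and their homotopy-colimit interpretation must be handled with care rather than quoted from the combinatorial literature.

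Next I would map into an arbitrary local fibrant algebra $Z$. Since $B_\bullet$ is Reedy cofibrant and $Z$ is fibrant, the cosimplicial simplicial set $\Map_{\algtm}(B_\bullet, Z)$ is Reedy fibrant, so the previous step yields
\[
\Map_{\algtm}(E,Z) \;\simeq\; \mathrm{Tot}\, \Map_{\algtm}(B_\bullet, Z).
\]
The adjunction identifies the term in cosimplicial degree $n$ with $\Map_\M(T^n(UE), UZ)$, so $\Map_{\algtm}(E,Z)$ is the totalization of $[n]\mapsto \Map_\M(T^n(UE), UZ)$, which is $\Map_\M(-,UZ)$ applied to the underlying resolution $T^*(UE)$. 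The identical construction applies to $F$, naturally in $f$.

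Finally I would feed in the hypothesis. Given $f\colon E \to F$ of cofibrant algebras with $U(f)$ a $\cat{C}$-local equivalence between cofibrant objects, the second hypothesis applied repeatedly shows that each $T^n(U(f))$ is a $\cat{C}$-local equivalence between cofibrant objects; mapping into the local fibrant object $UZ$ therefore produces a weak equivalence $\Map_\M(T^n(UF), UZ) \to \Map_\M(T^n(UE), UZ)$ in every cosimplicial degree. A levelwise weak equivalence of Reedy fibrant cosimplicial spaces induces a weak equivalence on $\mathrm{Tot}$, so $\Map_{\algtm}(F,Z) \to \Map_{\algtm}(E,Z)$ is a weak equivalence for every local fibrant $Z$. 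By the characterization of local objects in $\algtm$ recalled at the start of Section \ref{sec:alg-in-loc}, this says precisely that $f$ is a $T(\cat{C})$-local equivalence, so $U$ reflects local weak equivalences between cofibrant algebras.
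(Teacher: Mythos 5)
Your proposal has the same skeleton as the paper's proof --- the bar resolution built from iterates of $T$, the free--forgetful adjunction to push mapping spaces down into $\M$, levelwise application of the hypothesis that $T$ preserves local equivalences between cofibrant objects, and a Tot argument at the end --- and the levelwise part of your argument (including the inductive cofibrancy of the objects $T^n(UE)$ via the third hypothesis) is exactly right. The one step you do differently is the one you yourself flag as the main obstacle, and as written it is a genuine gap: you first try to prove that the realization of $B_\bullet(F,T,E)$ converges to $E$ in $\algtm$, and only then map into $Z$. In a general (semi-)model category the colimit of $B_\bullet$ over $\Delta^{op}$ is merely a reflexive coequalizer, not a homotopically meaningful realization, so ``$U$ preserves realizations because $T$ is finitary'' does not by itself give $U(\operatorname{hocolim}_{\Delta^{op}} B_\bullet) \simeq UE$; you would need to build the homotopy colimit via framings or skeletal filtrations, check that $U$ commutes with that construction, and run the extra-degeneracy argument at that level, none of which is supplied (and $\M$ is not assumed simplicial).

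The paper avoids this step entirely by reordering the operations: it applies $\Map_{\algtm}(-,Z) = \algtm(-,Z_*)$, for $Z_*$ a simplicial resolution of $Z$, to the augmented simplicial object \emph{before} taking any (co)limit. The adjunction turns this into an augmented cosimplicial simplicial set $\M(T^{*-1}(UX),U(Z_*))$, which is Reedy fibrant by the homotopy lifting extension theorem (this is precisely where pointwise Reedy cofibrancy of $T$ enters), and the extra degeneracy coming from the unit $X \to T(X)$ then shows directly that the augmentation $\Map_{\algtm}(X,Z) \to \mathrm{Tot}$ is a deformation retraction of fibrant simplicial sets (Lemma 2.1 of \cite{batanin-prohomotopy}). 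No convergence statement about the bar construction in $\algtm$ is needed, and everything takes place in simplicial sets, where Tot of a levelwise weak equivalence of Reedy fibrant objects is standard. If you replace your realization step by this dual one, the rest of your argument goes through verbatim; one further small correction is that you should map into the simplicial resolution $U(Z_*)$ (each $Z_n$ being a local fibrant algebra) rather than into $UZ$ itself.
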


\begin{proof} Suppose that $U$ preserves cofibrant objects.
Let $f:X \to Y$ be a morphism between cofibrant algebras such that $U(f)$ is a local weak equivalence. We must show $f$ is a local weak equivalence. Let $Z$ be a local fibrant algebra. We need to show that $Map_{Alg(T)}(X,Z)\leftarrow Map_{Alg(T)}(Y,Z)$ is a weak equivalence. Since $X$ is cofibrant, the mapping space $Map_{Alg(T)}(X,Z)$ can be constructed as $Alg(T)(X,Z_*)$ where $Z_*$ is a simplicial resolution of $Z$, again using \cite{barwickSemi} [3.64]. Observe that $Z_n$ is a local algebra for each $Z$ because $Z$ is local. We have a classical simplicial bar resolution for $X$, defined as $X\leftarrow T^*(X)$ in $\algtm$, in which all terms $T^n(X)$ are cofibrant in $\M$ (since $U(X)$ is cofibrant).  After applying $Map_{Alg(T)}(-,Z)$ we have, therefore, 
an augmented cosimplicial simplicial set:

$$ Map_{Alg(T)}(X,Z) = Alg(T)(X,Z_*) \to  Alg(T)(T^*(X),Z_*)$$ 

Then the cosimplicial simplicial set $Alg(T)(T^*(X),Z_*) = \M(T^{*-1}(U(X)),U(Z_*))$ is Reedy fibrant by the Homotopy Lifting Extension Theorem \cite{hirschhorn}[Corollary 16.5.14] applied in $\M$, using that $T$ is pointwise Reedy cofibrant. Moreover, this cosimplicial simplicial set has an extra degeneracy (precomposing with $X\to T(X)$) which shows that the augmentaion 

$$ Map_{Alg(T)}(X,Z) = Alg(T)(X,Z_*) \to  \M(T^{*-1}(U(X)),U(Z_*))$$ 

induces a deformation retraction (Lemma 2.1, \cite{batanin-prohomotopy}) of fibrant simplicial sets

$$ Map_{Alg(T)}(X,Z)  \to  Tot(\M(T^{*-1}(U(X)),U(Z_*))).$$ 

Hence, it suffices to show that $f$ induces a weak equivalence

$$ \M(T^{k}(U(X)),U(Z_*)) \stackrel{g}{\leftarrow} \M(T^{k}(U(Y)),U(Z_*)) $$

for all $k\ge 0.$ 
As we have assumed $U(f)$ is a local weak equivalence between cofibrant algebras, and that such local weak equivalences are preserved by $T$, the map $g$ is a weak equivalence, since all $Z_n$ are local fibrant algebras. Hence, $f$ is a local weak equivalence as required.
\end{proof}

We conclude with an example demonstrating the value of the generality of our approach (working with monads that need not come from operads), and an application of Theorem \ref{thm:implies-reflects-local}. 

\begin{example}
Let $\cat{V}$ be a monoidal model category and let $\cat{O}$ be any symmetric colored operad in $\cat{V}$. Let  $U(\cat{O})$ denote the underlying category of $\cat{O}$ (i.e. the category of unary operations). Very often, one wishes to lift localizations defined on the level of presheaves, $[U(\cat{O}),\cat{V}] \to [U(\cat{O}),\cat{V}]^{loc}$, to localizations defined on the level of $\cat{O}$-algebras $\algo \to \algo^{loc}$. 
For instance, this is relevant when one wishes to invert operations of $\cat{O}$ as in the setting of \cite{tillmann1, tillmann2}.
This situation can be modeled as a lift of Cisinski's localization, $[U(\cat{O}),\cat{V}]^{loc}$ of locally constant presheaves \cite{cisinski-locally-constant}, to the level of $\cat{O}$-algebras. 
When $\cat{O}$ encodes $n$-operads \cite{batanin-baez-dolan-via-semi}, lifting this localization to the category of $n$-operads would allow for a proof of the generalized Baez-Dolan Stabilization Hypothesis  \cite{batanin-white-baez-dolan}. 
A dual situation \cite{white-yau4} is relevant to proving the McClure-Smith Conjecture \cite{mcclure-smith}, by lifting a right Bousfield localization on $[U(\cat{O}),\cat{V}]$. 

Theorem \ref{thm:omnibus} provides numerous conditions under which such lifted localizations exist. The situation is complicated by the fact that the monad $T=UF$ in the square below is not given by any operad, as we explain below:

\begin{align} \label{diagram:mid-level-loc}
\xymatrix{
\algo \ar@<2.5pt>[r] \ar@<2.5pt>[d]^U
& \algo^{loc} \ar@<2.5pt>[l] \ar@<2.5pt>[d]^U \\
[U(\cat{O}),\cat{V}] \ar@<2.5pt>[r] \ar@<2.5pt>[u]^F  \ar@<2.5pt>[d]^-{R}
& [U(\cat{O}),\cat{V}]^{loc} \ar@<2.5pt>[l] \ar@<2.5pt>[u]^-{F} \\
[U(\cat{O})_d,\cat{V}]  \ar@<2.5pt>[u]^L
&  \\
}
\end{align}

Here $U(\cat{O})_d$ is the discrete category on $U(\cat{O})$, i.e. $U(\cat{O})_d$ is the set of colors $C$ for $\cat{O}$, so $[U(\cat{O})_d,\cat{V}]$ is the product $\cat{V}^{C}$. The functor $R$ is restriction, and $L$ is a left Kan extension. The composition $F\circ L$ is the free $\cat{O}$-algebra functor, and very often one can transfer a (semi-)model structure along this functor to $\algo$. However, the monad $T = UF$ is not given by a $\cat{V}$-operad, so $\algo^{loc}$ cannot be viewed as a category of algebras over an operad valued in $[U(\cat{O}),\cat{V}]^{loc}$. In fact, this monad is the free commutative monoid monad with respect to the Day-Street convolution product $\otimes_{\cat{O}}$ on the category of presheaves $[U(\cat{O}),\cat{V}]$. This product is defined by the following formula \cite{day-street}, \cite{batanin-berger-markl} (Appendix A), where $X_1,\dots,X_n \in [U(\cat{O}),\cat{V}]$ and $c\in U(\cat{O})$:

\[
\otimes_{\cat{O}}(X_1,\dots,X_n)(c) = \int^{c_1,\dots,c_n \in U(\cat{O})} \cat{O}(c_1,\dots,c_n;c) \otimes X(c_1)\otimes \dots \otimes X(c_n)
\]

Because the localization is defined in the middle row of (\ref{diagram:mid-level-loc}), there is no localization on the bottom row that can be lifted through a free operad algebra functor. When $\cat{O}$ satisfies the conditions of Theorem \ref{thm:implies-reflects-local}, $U$ reflects local equivalences, which is enough for many applications. Whether $U$ preserves local equivalences is an interesting question, which we believe does not have an affirmative answer in general.
\end{example}

% McClure-Smith conjecture: Alg_O is n-filtration of lattice paths; [U(O),V] is cosimplicial objects. 
% Stabilization is the same

\section*{Acknowledgments}

We would like to thank the NSF EAPSI program for facilitating our collaboration, Macquarie University for hosting the second author during the summer of 2014 when this work began, and the Program on Higher Structures in Geometry and Physics at the Max Planck Institute for hosting the authors in the spring of 2016 when this work was completed. We are also grateful to Carles Casacuberta, Javier Guti\'{e}rrez, and Markus Spitzweck for numerous helpful conversations, and to Donald Yau for spotting an error in an early draft of this paper.

%\bibliographystyle{plain}
%\bibliography{david-bib-2016}

\end{document}